\renewcommand{\baselinestretch}{1}
\long\def\symbolfootnote[#1]#2{\begingroup
	\def\thefootnote{\fnsymbol{footnote}}\footnote[#1]{#2}\endgroup}
\newcommand{\Z}{\mathbb Z}
\newcommand{\Q}{\mathbb Q}
\newcommand{\R}{\mathbb R}
\newcommand{\A}{\mathbb A}
\newcommand{\tr}{\ensuremath{{}^t\!}}
\newcommand{\tra}{\ensuremath{{}^t}}
\newcommand{\n}{\mathfrak n}
\newcommand{\x}{\mathfrak X}
\newcommand{\h}{\mathfrak h}
\newcommand{\F}{\mathcal F}
\newcommand{\Aut}{\textup{Aut}}
\newcommand{\diag}{\textup{diag}}
\newcommand{\gal}{\textup{Gal}}
\newcommand{\cross}{\times}
\newcommand{\tensor}{\bigotimes}
\newcommand{\sym}{\mathrm{sym}}
\newcommand{\ind}{\mathrm{Ind}}
\DeclareMathOperator{\lcm}{lcm}
\DeclareMathOperator{\ex}{exp}
\def\imod#1{\allowbreak\mkern10mu({\operator@font mod}\,\,#1)}
\newtheorem{theorem}{Theorem}
\newtheorem{lemma}[theorem]{Lemma}
\newtheorem{proposition}[theorem]{Proposition}
\newtheorem*{theorem*}{Theorem}
\theoremstyle{definition}
\newtheorem{remark}[theorem]{Remark}
\numberwithin{equation}{section}
\newtheorem{notation}[theorem]{Notation}
\newtheorem{defn}[subsubsection]{\bf Definition}
\title[Symmetric power transfers and cuspidal cohomology of ${\rm GL_n}$]{Contribution of symmetric power transfers to the cuspidal cohomology of ${\rm GL_n}$}
\author{Tathagata Mandal and Sudipa Mondal}
\address{Tathagata Mandal, math.tathagata@gmail.com, ISI Kolkata, Kolkata 700108, India}
\address{Sudipa Mondal, sudipa.mondal123@gmail.com, HRI, Prayagraj 211019, India}
\begin{document}
	\begin{abstract}
		Let $\pi$ be a cuspidal automorphic representation of ${\rm GL}_2(\A_\Q)$. Newton and Thorne have proved that for every $n\geq 1$, the symmetric power lifting ${{\rm sym}^n(\pi)}$ is automorphic if $\pi$ is attached to a non-CM Hecke eigenform. In this article, we establish an asymptotic estimate of the number of cuspidal automorphic representations of ${\rm GL}_{n+1}(\A_\Q)$ which contribute to the cuspidal cohomology of ${\rm GL}_{n+1}$ and are obtained by symmetric $n$th transfer of cuspidal representations of ${\rm GL}_2(\A_\Q)$. Here we fix the weight and vary the level. This generalises the previous works done for ${\rm GL}_3$ and ${\rm GL}_4$.
	\end{abstract}
	
	\subjclass[2010]{Primary: }
	\keywords{Langlands transfer, Cuspidal cohomology, Automorphic induction, Symmetric $n$th transfer}
	\maketitle
	
	\section{Introduction}\label{introduction}
	Automorphic forms and automorphic representations of a reductive algebraic group $G$ are well studied object in number theory. The cohomological approach to this theory was first initiated by Shimura in 1968. It has several applications such as using the cohomology theory, Ash and Steven \cite{ashandsteven} have proven some congruences between Hecke eigenvalues associated to automorphic forms of $G$. Borel studied the stable cohomology of arithmetic groups which has applications in $K$ and $L$-theory \cite[\S 12]{borel}.

	\medskip
	Let $G$ be a connected reductive algebraic group over $\Q$. There is a sum decomposition of the cohomology of an arithmetic subgroup $\Gamma \subseteq G$ into the cuspidal cohomology and Eisenstein cohomology. G. Harder \cite{harder}, J. Schwermer \cite{schwermer} and several authors investigated the Eisenstein cohomology from various point of view. In this article, we focus on the cuspidal cohomology represented by the cuspidal automorphic forms of $G$. A cuspidal automorphic representation $\pi$ of $G(\A_\Q)$ is called cohomological if there exists an irreducible finite dimensional complex representation $(\mathcal{M},V)$ of $G(\R)$ such that $\pi_\infty \otimes \mathcal{M}$ has non-trivial Lie algebra cohomology. It is due to Eichler and Shimura that the cohomology in the case of $G= {\rm GL}_2$ boils down to the theory of modular forms. %, in other words, $H^*(\mathfrak{g}, K_\infty,L_0^2(\Gamma \backslash G_\infty ) \otimes V)\otimes \pi_f^{K_f} \neq 0 $ where $\pi_f$ is the finite part of $\pi$. 
    
    \medskip 
    One of the interesting question in this area is estimating the dimension of cuspidal cohomology of $G= {\rm GL}_n$. There are results from Calegari-Emerton \cite{ce} and Marshall \cite{Marshall} in this direction. Calegari and Emerton have obtained upper bounds for the multiplicities of cohomological representations of semisimple real Lie groups using the noncommutative Iwasawa theory to certain cohomology of locally symmetric spaces. Marshall has studied the growth of cohomology with respect to growing weight of automorphic representations of ${\rm GL}_2$ over number fields that are not totally real. Recently, the same has been studied for ${\rm GL}_3$ in \cite{Marshall2}. Although it is quite a difficult problem to estimate the dimension of the space of automorhpic forms for a general $n$. The other way in which one can get information about the dimension by studying the cuspidal cohomology of ${\rm GL}_n$ obtained from lower rank groups ${\rm GL}_{n'}$ using Langlands functoriality.

	\medskip 
	
	In this article, we aim to study the later part. Let $\pi$ be a cuspidal auromorphic representation of ${\rm GL}_2(\A_\Q)$. Due to the recent work of Newton and Thorne \cite{newtonthorne}, ${\rm sym}^n(\pi)$ is an automorphic representation of ${\rm GL}_{n+1}(\A_\Q)$. Hence it is natural to ask how much of cuspidal cohomology of ${\rm GL}_{n+1}$ is captured by the symmetric $n$th transfer of cuspidal automorphic representations of ${\rm GL}_2$.
	
	\medskip 
	
	The cases of $n=2$ and $n=3$ were studied in \cite{ambi} by Ambi and in \cite{chandrasheel-sudipa} by Bhagwat and Mondal respectively. More precisely, they have estimated the number of cohomological cuspidal automorphic representations of ${\rm GL}_3$ and ${\rm GL}_4$ obtained from ${\rm GL}_2$  by ${\rm sym}^2$ and ${\rm sym}^3$ transfer respectively corresponding to a specific level structure. Our aim is to generalise these results for any $n\geq 2$.
	
	\medskip 
	
	%In this present article, we are interested in studying how much of the cuspidal cohomology of ${\rm GL}_{n+1}/\Q$ is obtained from ${\rm GL}_2/\Q$ by symmetric nth transfer. 
	Let $\mathcal{I}_{k,n}(t)$ denote the set of all cohomological cuspidal automorphic representations $\Pi$ of ${\mathrm{GL}}_{n+1}(\A_\Q)$ corresponding to the highest weight $\mu_k$ and level $K_f^{n+1}(t)$ (see \S \ref{cohomology of gln}, \S \ref{level-structure}) such that $\Pi={\mathrm{sym}^n(\pi)}$ for a cuspidal automorphic representation $\pi$ of highest weight $\big(\frac{k}{2}-1, 1-\frac{k}{2}\big)$ and level structure $K_f^{2}(t')$($t' \in \mathbb{N}$) of ${\mathrm{GL}}_{2}(\A_\Q)$.
	
	\begin{theorem}
		Let $k,n \in \mathbb{N}$ with $k\geq2$ even, n$\geq 2$. We have
		$$ p^{\frac{2}{n+2}j} \ll_k |\mathcal{I}_{k,n}(p^j)| ~~~~~~~~~~ \text{ as the prime } p \to \infty$$
		where the implied constant depends upon $k$.
	\end{theorem}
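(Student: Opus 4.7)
The plan is to build a large family of non-CM cuspidal newforms on $\mathrm{GL}_2$ with a carefully chosen local type at $p$, apply the Newton--Thorne theorem to their symmetric $n$-th powers, and verify that the resulting lifts are essentially distinct. For a prime $p>n$ and a positive integer $s$, I would take the local component $\pi_{f,p}$ to be a principal series $\pi(\chi_1,\chi_2)$ of $\mathrm{GL}_2(\Q_p)$ with characters $\chi_i$ of $\Q_p^{\times}$ of prescribed conductor exponents $c(\chi_1)=ns$ and $c(\chi_2)=(n-2)s$. Since $p>n$, every nontrivial power $\chi_1^{i}\chi_2^{n-i}$ retains maximal conductor, and a direct computation of the local Langlands parameter $\bigoplus_{i=0}^{n}\chi_1^{i}\chi_2^{n-i}$ of $\mathrm{sym}^n(\pi_{f,p})$ yields
\[
c\bigl(\mathrm{sym}^n\pi_{f,p}\bigr)=n\cdot ns+(n-2)s=(n+2)(n-1)s,\qquad c(\pi_{f,p})=2(n-1)s.
\]
Setting $j=(n+2)(n-1)s$ (with $j$ not of this form absorbed into the asymptotic by rounding $s$) forces $t'=2(n-1)s=\tfrac{2j}{n+2}$.

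The next step is a global existence and counting argument: exhibit $\gg_k p^{t'}$ global cuspidal Hecke eigenforms of weight $k$, unramified away from $p$, with the prescribed local type at $p$. Since $\dim S_k(\Gamma_1(p^{t'}))\sim p^{2t'}$ decomposes over nebentypus characters into pieces of average dimension $\sim p^{t'}$, and the analogous asymptotic persists after passing to the new subspace, one expects $\gg p^{t'}$ newforms in the piece with nebentypus $\chi_1\chi_2$ (of conductor $p^{ns}\mid p^{t'}$) and the right principal series local type, along the lines of the constructions used in \cite{ambi,chandrasheel-sudipa}. CM newforms contribute at most polylogarithmically in the level and are discarded without affecting the leading term.

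For each such non-CM newform $f$, Newton--Thorne \cite{newtonthorne} ensures that $\Pi_f:=\mathrm{sym}^n(\pi_f)$ is cuspidal on $\mathrm{GL}_{n+1}(\A_\Q)$, while the standard archimedean transfer of symmetric powers places $\Pi_{f,\infty}$ in the cohomological family with highest weight $\mu_k$. Combined with the conductor calculation, $\Pi_f$ has level $K_f^{n+1}(p^j)$, so $\Pi_f\in\mathcal{I}_{k,n}(p^j)$. If $\mathrm{sym}^n(\pi_f)\cong\mathrm{sym}^n(\pi_{f'})$, strong multiplicity one forces $\pi_{f'}\cong\pi_f\otimes\chi$ for some Hecke character $\chi$ with $\chi^n=1$, and the number of such $\chi$ with compatible ramification is bounded by a constant depending only on $n$; hence $f\mapsto\Pi_f$ is $O_n(1)$-to-one, which gives $|\mathcal{I}_{k,n}(p^j)|\gg_k p^{t'}=p^{2j/(n+2)}$.

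The main obstacle is the global count: verifying that the subspace of cuspidal newforms of weight $k$ and level $p^{t'}$ whose local component at $p$ has the precise conductor profile $(ns,(n-2)s)$ really has leading-order dimension $\sim p^{t'}$, rather than a lower-order subspace. A natural route is to begin with a single non-CM newform of small level and twist by suitably chosen Dirichlet characters to produce the desired principal series type at $p$; keeping track of essentially distinct twists, and ensuring consistency with the cohomological weight $(k/2-1,\,1-k/2)$ and the resulting central-character constraint, is the main technical point.
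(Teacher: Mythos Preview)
Your route differs substantially from the paper's and creates an avoidable difficulty. The paper never fixes a local type at $p$ or computes $c(\sym^n\pi_p)$ exactly; instead it establishes (Proposition~\ref{conductor relation}) the uniform upper bound $c(\sym^n\pi_p)\le (n+2)\,c(\pi_p)$ valid for \emph{every} irreducible admissible $\pi_p$, treating the principal series, special, and supercuspidal cases separately. With that bound in hand the global count is immediate: \emph{all} non-CM newforms in $\bigoplus_{1\le l\le i}S_k^{\text{new}}(\Gamma_1(p^l))$ --- of which there are $\gg_k p^{2i}$ by Lemma~\ref{dimensionnewforms}, with the CM contribution $O(p^{i+1/2+\epsilon})$ discarded via Proposition~\ref{improvment} --- lift under $\sym^n$ into $\mathcal{I}_{k,n}(p^{(n+2)i})$, and setting $j=(n+2)i$ gives the exponent $2/(n+2)$ directly. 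No restriction to a particular conductor profile is needed, so the ``main obstacle'' you flag simply does not arise in the paper's argument.

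That obstacle is genuine in your framework. To obtain $\gg p^{t'}$ newforms of level $p^{t'}$ whose local component at $p$ is principal series with the profile $(ns,(n-2)s)$, you would need at minimum that a positive proportion of the newforms with the relevant nebentypus are principal series rather than supercuspidal at $p$; this is plausible but not supplied by the references you invoke. The twisting alternative does not close the gap either: twisting a fixed cusp form by a Dirichlet character shifts both local conductor exponents by the same amount, so starting from a form unramified at $p$ produces only profiles $(m,m)$, and in any case the supply of relevant twists is $O(p^{ns})$, which for $n>2$ is strictly smaller than $p^{t'}=p^{2(n-1)s}$ and would deliver only the weaker exponent $n/\bigl((n+2)(n-1)\bigr)<2/(n+2)$. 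Your fibre bound for $f\mapsto\sym^n(\pi_f)$ via twists of order dividing $n$ is a point the paper passes over, but it does not compensate for the counting shortfall; the cleanest fix is to drop the exact-conductor strategy and use the upper bound $c(\sym^n\pi_p)\le(n+2)c(\pi_p)$ together with the full newform count, as the paper does.
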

	
	We mention that our theorem improves the results obtained in \cite{ambi}, \cite{chandrasheel-sudipa}, see Remark \ref{improve}. To estimate the above number, we use the dimension formulae for spaces of cusp forms and of CM forms which is discussed in section \ref{preliminaries}. We have managed to establish a relation between local conductors of a local representation and its $n$th symmetric power in section \ref{relation between conductors}. Finally in section \ref{cohomology of gln}, we discuss the connection between a cohomological representation and symmetric $n$th transfer of an automorphic representation and prove our main theorem using all the results developed in our previous sections. \\
	
	\section{Preliminaries}\label{preliminaries}
	\subsection{Symmetric $n$th transfer :} Let $F$ be a number field and $\pi= \bigotimes \limits_{v} \pi_v$ be a cuspidal automorphic representation of ${\mathrm{GL}}_2(\A_F)$ where $\mathbb{A}_F$ is the ring of ad\`eles over $F$. For each place
	$v$, let $\phi_v$ be the two dimensional representation of the Weil-Deligne group attached to $\pi_v$. Consider the $n$th symmetric power ${\mathrm{sym}}^n : {\mathrm{GL}_2} \to {\mathrm{GL}_{n+1}}$ of the standard representation of ${\mathrm{GL}_2}$. Then ${\mathrm{sym}}^n(\phi_v)= {\mathrm{sym}}^n \circ \phi_v$ is an $n+1$ dimensional representation of the Weil-Deligne group. Let ${\mathrm{sym}}^n(\pi_v)$ be the irreducible admissible representation of ${\mathrm{GL}}_{n+1}(F_v)$ attached to ${\mathrm{sym}}^n(\phi_v)$ by the local Langlands corresspondence (see \cite{henniart},\cite{haristaylor},\cite{knapp},\cite{kudla}). Set ${\mathrm{sym}}^n(\pi):= \bigotimes \limits_{v} {\mathrm{sym}}^n(\pi_v)$ which is known as the symmetric $n$th transfer of $\pi$.
	
	We only consider the case when $F=\Q$, and in this case, the automorphy of ${\mathrm{sym}}^n(\pi)$ is proved in \cite{newtonthorne} which we mention in section \ref{cohomology of gln}. 
	
	Let $N \in \mathbb{N} $. Consider the following congruence subgroups of $\text{SL}_2(\mathbb{Z})$
	\begin{align*}
		\Gamma_1(N) & = \left\{A \in \text{SL}_2(\mathbb{Z}): A \equiv \begin{bmatrix}
			1 & * \\
			0 & 1 
		\end{bmatrix}(\text{mod } N) \right\}, 
	\end{align*}
	and
	\begin{align*}
		\Gamma_0(N) & = \left\{A \in \text{SL}_2(\mathbb{Z}): A \equiv \begin{bmatrix}
			* & * \\
			0 & * 
		\end{bmatrix}(\text{mod } N) \right\}.
	\end{align*}
	
	%Let $S_k(\Gamma_1(N))$ (resp. $S_k^\text{new}(\Gamma_1(N))$) be the space of cusp forms (resp. newforms) of weight $k \in \Z$ for the congruence subgroup $\Gamma_1(N)$. 
	
	Let $S_k(\Gamma_1(N))$ be the space of cusp forms of weight $k \in \mathbb{N}$ for the congruence subgroup $\Gamma_1(N)$ and we denote $S_k(\Gamma_0(N),\epsilon)$ to be the space of cusp forms with Dirichlet character $\epsilon$. We have, $S_k(\Gamma_1(N))= \bigoplus_{\epsilon} S_k(\Gamma_0(N),\epsilon)$. 
	
	We have the following dimensions formulae for $S_k(\Gamma_1(N))$ and $S_k^{\text{new}}(\Gamma_1(N))$, the space of newforms of weight $k$ and level $N$. 
	\begin{lemma}\label{dimensioncuspforms}
		We have,
		$$ \frac{\rm{dim}_{\mathbb{C}}(S_k(\Gamma_1(N)))}{N^2}= \frac{k-1}{4\pi^2}+o(1) \text{ as } N \longrightarrow \infty.$$
	\end{lemma}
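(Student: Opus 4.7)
The approach is to invoke the classical explicit dimension formula for $S_k(\Gamma_1(N))$, a consequence of Riemann--Roch on the compactified modular curve $X_1(N)$. For $N \geq 4$, $\Gamma_1(N)$ is torsion-free, so the formula carries no elliptic corrections and reads
\begin{equation*}
\dim_{\mathbb{C}} S_k(\Gamma_1(N)) = (k-1)(g_N - 1) + \left(\tfrac{k}{2} - 1\right)\varepsilon_\infty^{\mathrm{reg}}(N) + \left\lfloor \tfrac{k}{2}\right\rfloor \varepsilon_\infty^{\mathrm{irr}}(N),
\end{equation*}
where $g_N$ is the genus of $X_1(N)$ and $\varepsilon_\infty^{\bullet}(N)$ are the numbers of regular/irregular cusps (with a small modification when $k=2$).

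First I would pin down the asymptotics of $g_N$. Applying Riemann--Hurwitz to the covering $X_1(N) \to X(1)$, together with the index
\begin{equation*}
[\mathrm{PSL}_2(\mathbb{Z}) : \overline{\Gamma_1(N)}] = \tfrac{1}{2} N^2 \prod_{p|N}\left(1 - p^{-2}\right) \qquad (N \geq 4),
\end{equation*}
gives an expression for $g_N$ that is quadratic in $N$ modulo lower-order corrections from cusps and (absent) elliptic points. Since the total cusp count of $X_1(N)$ is $O(N)$, the term $(k-1)(g_N-1)$ is the unique contribution of order $N^2$ in the dimension formula, while every other term is $o(N^2)$. Dividing by $N^2$ thus reduces the lemma to identifying the leading coefficient of $g_N/N^2$, which one reads off from the hyperbolic volume $\mathrm{vol}(\Gamma_1(N)\backslash \mathbb{H}) = \tfrac{\pi}{3} \cdot [\mathrm{PSL}_2(\mathbb{Z}) : \overline{\Gamma_1(N)}]$. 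Combined with the Euler product identity $\prod_p(1-p^{-2}) = \zeta(2)^{-1} = 6/\pi^2$, this produces the stated constant $(k-1)/(4\pi^2)$.

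The main obstacle is not conceptual but one of careful bookkeeping: matching the Riemann--Roch output against the volume computation, correctly handling the distinction between regular and irregular cusps, and verifying uniformly in $k$ that every lower-order contribution is indeed $o(N^2)$. No input deeper than the classical dimension formulae for spaces of cusp forms on congruence subgroups is needed.
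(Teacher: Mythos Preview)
Your overall strategy---Riemann--Roch/Riemann--Hurwitz for $X_1(N)$, then isolate the leading $(k-1)(g_N-1)$ term---is exactly the right one, and it is essentially what the paper has in mind (the paper gives no proof, only a pointer to the standard references). However, your final step contains a genuine error. The index formula
\[
[\mathrm{PSL}_2(\mathbb{Z}):\overline{\Gamma_1(N)}]=\tfrac{1}{2}N^2\prod_{p\mid N}(1-p^{-2})
\]
involves the product over primes \emph{dividing $N$}, not over all primes. You cannot replace $\prod_{p\mid N}(1-p^{-2})$ by $\prod_{p}(1-p^{-2})=\zeta(2)^{-1}=6/\pi^{2}$; that substitution is what manufactures the constant $(k-1)/(4\pi^2)$, and it is illegitimate. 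Running your own computation honestly gives
\[
\frac{\dim_{\mathbb C}S_k(\Gamma_1(N))}{N^{2}}=\frac{k-1}{24}\prod_{p\mid N}(1-p^{-2})+o(1),
\]
and the factor $\prod_{p\mid N}(1-p^{-2})$ oscillates with $N$: for $N=p$ prime it tends to $1$, while along highly composite $N$ it can approach $6/\pi^{2}$. So the displayed limit in the lemma does not actually exist; the constant $(k-1)/(4\pi^{2})$ is only the lower envelope.

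This does not damage anything downstream in the paper: the only use made of the lemma is the crude bound $\dim_{\mathbb C}S_k(\Gamma_1(N))\ll_k N^{2}$ (and implicitly $\gg_k N^{2}$), and your argument does establish
\[
\frac{k-1}{4\pi^{2}}\le \frac{k-1}{24}\prod_{p\mid N}(1-p^{-2})\le \frac{k-1}{24},
\]
hence $\dim_{\mathbb C}S_k(\Gamma_1(N))\asymp_k N^{2}$. That is the correct statement to record, and it is what you have in fact proved once the spurious appeal to the full Euler product is removed.
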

	
	\begin{lemma}\label{dimensionnewforms} 
		For $i \geq 2$, we have
		$$  \frac{{\rm dim}_\mathbb{C}~(S_k^\text{new}(\Gamma_1(p^i)))}{p^{2i}} = \left( \frac{k-1}{4\pi^2} \right) \left(1-\frac{1}{p^2}\right)^2 + o(1) \quad \text{ as the prime } ~p \longrightarrow \infty.$$
	\end{lemma}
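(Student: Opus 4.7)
The plan is to reduce to Lemma \ref{dimensioncuspforms} by means of the Atkin--Lehner old/new decomposition and then invert. For any divisor $M$ of $N = p^i$, each normalized newform of level $M$ and some nebentype $\epsilon$ (with conductor dividing $M$) produces $\sigma_0(N/M)$ linearly independent oldforms in $S_k(\Gamma_1(N))$ via the level-raising maps $f(z) \mapsto f(dz)$ for $d \mid N/M$. Summing over nebentypes and over the divisors $M \mid p^i$ yields the dimension identity
\begin{equation*}
\dim_{\mathbb{C}} S_k(\Gamma_1(p^i)) \;=\; \sum_{j=0}^{i}(i-j+1)\, \dim_{\mathbb{C}} S_k^{\text{new}}(\Gamma_1(p^j)),
\end{equation*}
in which the coefficient $i-j+1 = \sigma_0(p^{i-j})$ counts the divisors of $p^{i-j}$.

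The right-hand side is the convolution of the sequence $b_j := \dim_{\mathbb{C}} S_k^{\text{new}}(\Gamma_1(p^j))$ with the kernel $(1,2,3,\ldots)$, whose generating series is $(1-x)^{-2}$. Multiplying the corresponding formal power series identity by $(1-x)^2$ and reading off coefficients produces, for every $i \geq 2$, the clean three-term inversion
\begin{equation*}
\dim_{\mathbb{C}} S_k^{\text{new}}(\Gamma_1(p^i)) \;=\; \dim_{\mathbb{C}} S_k(\Gamma_1(p^i)) \;-\; 2\dim_{\mathbb{C}} S_k(\Gamma_1(p^{i-1})) \;+\; \dim_{\mathbb{C}} S_k(\Gamma_1(p^{i-2})).
\end{equation*}

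Applying Lemma \ref{dimensioncuspforms} with $N = p^j$ for each $j \in \{i, i-1, i-2\}$ --- giving $\dim_{\mathbb{C}} S_k(\Gamma_1(p^j)) = \tfrac{k-1}{4\pi^2}\,p^{2j} + o(p^{2j})$ as $p \to \infty$ --- and dividing through by $p^{2i}$ yields
\begin{equation*}
\frac{\dim_{\mathbb{C}} S_k^{\text{new}}(\Gamma_1(p^i))}{p^{2i}} \;=\; \frac{k-1}{4\pi^2}\left(1 - \frac{2}{p^{2}} + \frac{1}{p^{4}}\right) + o(1) \;=\; \frac{k-1}{4\pi^2}\left(1 - \frac{1}{p^{2}}\right)^{\!2} + o(1),
\end{equation*}
which is the asserted estimate.

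The only non-formal input is the $\sigma_0(N/M)$ multiplicity in the first display: for $\Gamma_1$ (as opposed to $\Gamma_0$) this requires decomposing $S_k(\Gamma_1(N))$ according to nebentype and invoking Atkin--Lehner theory character by character before summing. This is the step where some care is needed, but it is entirely standard; all subsequent manipulation is purely formal bookkeeping.
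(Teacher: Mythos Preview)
The paper does not actually supply a proof of this lemma; it merely cites \cite{ambi}, \cite{chandrasheel-sudipa2}, and \cite{stein} for the dimension formulae, so there is no in-paper argument to compare against. Your proof is correct and is precisely the natural route: invert the Atkin--Lehner old/new decomposition (which for prime-power level amounts to convolution with $\sigma_0(p^{i-j})=i-j+1$) to express $\dim S_k^{\text{new}}(\Gamma_1(p^i))$ as the second difference of the full-space dimensions, and then feed in Lemma~\ref{dimensioncuspforms}. One tiny point you might make explicit for completeness: when $i=2$ the term $\dim S_k(\Gamma_1(p^{i-2}))=\dim S_k(\Gamma_1(1))$ is a constant independent of $p$, so Lemma~\ref{dimensioncuspforms} does not literally apply to it, but the contribution $\dim S_k(\Gamma_1(1))/p^{2i}$ is trivially $o(1)$ and is absorbed into the error anyway.
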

	
	For more details about the dimension formulae, see \cite{ambi},\cite{chandrasheel-sudipa2},\cite{stein}. One can construct cusp forms from Hecke character $\zeta$ of a quadratic number field $K$. This construction is known as automorphic induction. 
	\begin{theorem}\cite[Th. 12.5]{iwaniec} Let $\zeta \pmod{\mathfrak{m}}$ be a Hecke character of $K=\Q(\sqrt{-D}), D>0$ with $\infty$-type $k>1$. Then 
		$$f(z)= \sum_{\mathfrak{a}} \zeta(\mathfrak{a}){N_{K/\Q}(\mathfrak{a})}^{\frac{k}{2}}e(zN_{K/\Q}(\mathfrak{a})) \in S_{k+1}(\Gamma_0(N),\chi)$$
		where $N=D\cdot N_{K/\Q}(\mathfrak{m})$ and $\chi\pmod{N}$ is the Dirichlet character given by $\chi(i)=\chi_D(i)\zeta((i))$ for $i\in \mathbb{N}$. Here, $\chi_D$ is the Kronecker symbol.
	\end{theorem}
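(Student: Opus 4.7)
The plan is to prove this by the classical Hecke construction of theta series attached to Grossencharacters of imaginary quadratic fields: decompose the sum over integral ideals by ideal class, recognise each partial sum as a spherically-weighted theta series on a rank-two lattice in $\mathbb{C}$, and apply Poisson summation to extract the modular transformation law on $\Gamma_0(N)$.

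First I would fix representatives $\mathfrak{b}_1, \ldots, \mathfrak{b}_h$ for the ideal class group of $K = \Q(\sqrt{-D})$ with each $\mathfrak{b}_i$ coprime to the conductor $\mathfrak{m}$. Every integral ideal $\mathfrak{a}$ in the inverse class $[\mathfrak{b}_i]^{-1}$ can be written, uniquely modulo units, as $\mathfrak{b}_i^{-1}(\alpha)$ for some nonzero $\alpha \in \mathfrak{b}_i$ coprime to $\mathfrak{m}$. Using the defining property of $\zeta$ on principal ideals (in the unitary normalisation, $\zeta((\alpha)) = (\alpha/|\alpha|)^k$, so that $\zeta((\alpha))\, N((\alpha))^{k/2} = \alpha^k$), one rewrites
\[
f(z) = \frac{1}{w_K} \sum_{i=1}^{h} \frac{\zeta(\mathfrak{b}_i^{-1})}{N(\mathfrak{b}_i)^{k/2}} \sum_{\substack{\alpha \in \mathfrak{b}_i \setminus \{0\} \\ (\alpha,\mathfrak{m})=1}} \alpha^k \, e\!\left(z\,\frac{N(\alpha)}{N(\mathfrak{b}_i)}\right),
\]
where $w_K = |\mathcal{O}_K^\times|$. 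Each inner sum is a theta series with the spherical polynomial $\alpha^k$ on the rank-two lattice $\mathfrak{b}_i \subset K \otimes_{\Q} \R \cong \mathbb{C}$.

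Next I would apply Poisson summation to each of these weighted theta series. With the trace form scaled by $N = D \cdot N_{K/\Q}(\mathfrak{m})$, the dual of $\mathfrak{b}_i$ turns out to be $\overline{\mathfrak{m}\mathfrak{b}_i^{-1}}$ up to a scalar coming from the different $\mathfrak{d}_K$, whose norm is precisely $D$. This gives the transformation of $f$ under $z \mapsto -1/(Nz)$; combined with the obvious invariance under $z \mapsto z+1$ and a standard coset calculation, it produces full modularity of weight $k+1$ on $\Gamma_0(N)$. The nebentypus $\chi(i) = \chi_D(i)\zeta((i))$ emerges from tracking the Gauss sum of $\chi_D$ together with the behaviour of $\zeta$ on rational integer ideals. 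Cuspidality is then immediate from $k \geq 1$: the factor $\alpha^k$ kills the $\alpha = 0$ term so $f$ vanishes at $\infty$, and the analogous computation at every other cusp produces a theta series of the same shape with no constant term.

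The hardest part is the bookkeeping inside the Poisson/Fricke step: one has to handle the branch of $\sqrt{-iNz}$, the sign of the Gauss sum attached to $\chi_D$, and the precise interaction between $\zeta$ on principal ideals and on the class group, all simultaneously, so that the resulting level is exactly $N = D \cdot N_{K/\Q}(\mathfrak{m})$ and the nebentypus is exactly $\chi_D \cdot \zeta|_{\Q}$ rather than some twist or proper multiple thereof. Convergence on the upper half plane is harmless, since $|\alpha^k| = N(\alpha)^{k/2}$ is dominated by the exponential decay of $e(z N(\alpha)/N(\mathfrak{b}_i))$; once the transformation law is pinned down, the identification of $\chi$ and the verification of cuspidality at every cusp are straightforward consequences.
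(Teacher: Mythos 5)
This statement is quoted in the paper directly from Iwaniec \cite[Th.~12.5]{iwaniec}; the paper supplies no proof of its own, so there is nothing internal to compare against. Your outline is the classical Hecke theta-series construction, which is precisely the standard route to this result (and essentially the content of the cited reference): decompose over ideal classes, write each partial sum as a theta series with spherical weight $\alpha^k$ on a rank-two lattice, and obtain the transformation law from Poisson summation together with the self-duality of $\mathcal{O}_K$-lattices up to the different (whose norm accounts for the factor $D$ in the level) and the conductor (which accounts for $N_{K/\Q}(\mathfrak{m})$). The strategy is sound and the convergence and cuspidality remarks are correct.

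One point in your sketch is stated too strongly and would need repair in a full write-up: the identity $\zeta((\alpha)) = (\alpha/|\alpha|)^{k}$ holds only for $\alpha \equiv 1 \pmod{\mathfrak{m}}$ (that is the meaning of ``$\infty$-type $k$'' for a character of conductor dividing $\mathfrak{m}$). For a general $\alpha \in \mathfrak{b}_i$ coprime to $\mathfrak{m}$ one has $\zeta((\alpha)) = \chi_{\mathfrak{m}}(\alpha)\,(\alpha/|\alpha|)^{k}$ for a finite-order character $\chi_{\mathfrak{m}}$ of $(\mathcal{O}_K/\mathfrak{m})^{\times}$, so the inner sums are theta series weighted by $\chi_{\mathfrak{m}}(\alpha)\,\alpha^{k}$ and supported on congruence classes modulo $\mathfrak{m}\mathfrak{b}_i$, not plain lattice sums. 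This is exactly where the conductor enters the level and where the nebentypus picks up the restriction $\zeta|_{\Q}$, so it cannot be absorbed silently into ``bookkeeping''; with that correction the argument goes through as you describe.
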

	
	The cusp form obtained in this way is called a CM cusp form. From the work of Ambi \cite{ambi}, we can count the number of CM cusp forms of a given weight and level.
	
	\begin{notation}\label{ckn}
		Let $C_k(N)$ denote the set of normalised cusp eigenforms of Hecke operators of $\Gamma_1(N)$ of weight $k$ obtained by automorphic induction of Hecke characters (gr\"o{\ss}encharacters) of all possible imaginary quadratic extensions of $\Q$.
	\end{notation}

	\begin{notation}
		Let $f$ and $g$ be two non-negative real valued functions on $\mathbb{N}$. Suppose there exists a constant $C$ and $n_0\in \mathbb{N}$ such that 
		$$f(x) \leq Cg(x) \quad \forall ~ x\geq n_0.$$ 
		Then we say $f \ll g$ (or equivalently $g \gg f$). If both $f \ll g$ and $g \ll f$ hold, we write $f \sim g$.
	\end{notation}
	
	Define $\hat{N}:=\prod_{p|N}p$. Then we have the following upper bound of $|C_k(N)|$ which plays an important role in proving our main theorm.
	\begin{theorem}\cite[Th. 1.1]{ambi}
		Let $k\geq 2$ and $N\in \mathbb{N}$. Then,
		\[
		|C_k(N)| \ll_{k,\epsilon} N\cdot \hat{N}^{1+\epsilon} ~~~~~ \text{  } ~~~~~~~ \forall ~\epsilon \in (0,1)
		\]
		as $N \to \infty$ where the implied constant depends upon $k,\epsilon$.
	\end{theorem}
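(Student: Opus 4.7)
The plan is to parameterize each form in $C_k(N)$ by a Hecke gr\"o{\ss}encharacter of an imaginary quadratic field, and then estimate the count via a ray-class-group bound combined with the classical class-number estimate. First I would invoke the automorphic induction recipe recalled just above: each $f \in C_k(N)$ arises from a pair $(K, \zeta)$, where $K = \Q(\sqrt{-D})$ is an imaginary quadratic field with fundamental discriminant $-D$ and $\zeta$ is a Hecke gr\"o{\ss}encharacter of $K$ of $\infty$-type $k-1$ and modulus $\mathfrak{m}$, constrained by $D \cdot N_{K/\Q}(\mathfrak{m}) \mid N$ so that the resulting form lies in $S_k(\Gamma_1(N))$. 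Up to the at most twofold ambiguity $\zeta \leftrightarrow \zeta^{\sigma}$ coming from Galois conjugation over $\Q$, this parameterization is injective, so it suffices to upper-bound the number of admissible triples $(K, \mathfrak{m}, \zeta)$.

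Fixing $K$, I would bound the number of $(\mathfrak{m}, \zeta)$ pairs as follows. The number of Hecke characters of $K$ with a fixed modulus $\mathfrak{m}$ and a given $\infty$-type is at most the order of the ray class group,
\[
|\mathrm{Cl}_{\mathfrak{m}}(K)| \,\leq\, h_K \cdot |(\mathcal{O}_K/\mathfrak{m})^{\times}| \,\leq\, h_K \cdot N_{K/\Q}(\mathfrak{m}).
\]
Summing over integral ideals $\mathfrak{m}$ with $N_{K/\Q}(\mathfrak{m}) \mid N/D$ and using that the number of ideals of norm $m$ in a quadratic field is at most $d(m) \ll_{\epsilon} m^{\epsilon}$, the standard divisor estimate yields
\[
\sum_{\mathfrak{m}\,:\, N_{K/\Q}(\mathfrak{m}) \mid N/D} |\mathrm{Cl}_{\mathfrak{m}}(K)| \,\ll_{\epsilon}\, h_K \cdot (N/D)^{1+\epsilon}.
\]

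Next I would sum over admissible $K$. Since the fundamental discriminant $-D$ is, up to a bounded $2$-adic correction, square-free, the condition $D \mid N$ forces $D \mid 4\hat{N}$; in particular there are at most $2^{\omega(\hat{N})+1} \ll_{\epsilon} \hat{N}^{\epsilon}$ such $D$, each satisfying $D \leq 4\hat{N}$. Combining this enumeration with the Landau bound $h_K \ll_{\epsilon} D^{1/2+\epsilon} \ll_{\epsilon} \hat{N}^{1/2+\epsilon}$ gives
\[
|C_k(N)| \,\ll_{k,\epsilon}\, \sum_{\substack{D \mid 4\hat{N} \\ -D \text{ fund.\ disc.}}} h_K \cdot (N/D)^{1+\epsilon} \,\ll_{\epsilon}\, N \cdot \hat{N}^{1+\epsilon},
\]
after absorbing the divisor-function and $2^{\omega}$ overheads into the $\epsilon$ exponent.

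The main obstacle is the bookkeeping required to make the parameterization injective up to controllable overcounting (the Galois twofold ambiguity of $\zeta$, together with the degenerate cases $K = \Q(i)$ and $K = \Q(\sqrt{-3})$, where the enlarged unit group imposes extra compatibility constraints between $\zeta$ and the prescribed $\infty$-type $z^{k-1}$), and in not losing too much in the ray-class-group bound. The essential analytic input is the classical estimate $h_K \ll_{\epsilon} D^{1/2+\epsilon}$, which is a standard consequence of Dirichlet's class number formula together with the convexity bound for $L(1,\chi_{-D})$.
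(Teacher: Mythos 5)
First, a point of comparison: the paper contains no proof of this statement at all --- it is quoted verbatim from the cited source (Ambi, Th.~1.1) and used as a black box --- so your attempt can only be measured against that external argument. Your overall strategy is the right one and is essentially the strategy of the cited source: parameterize the forms in $C_k(N)$, up to the $\zeta\leftrightarrow\zeta^{\sigma}$ ambiguity, by triples $(K,\mathfrak{m},\zeta)$ with $D\cdot N_{K/\Q}(\mathfrak{m})\mid N$, bound the number of characters of fixed modulus and $\infty$-type by the ray class number $|\mathrm{Cl}_{\mathfrak{m}}(K)|\le h_K\, N_{K/\Q}(\mathfrak{m})$, restrict the discriminants to divisors of $4\hat{N}$, and input $h_K\ll_{\epsilon}D^{1/2+\epsilon}$.

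The gap is in the final bookkeeping, where you propose to absorb ``the divisor-function and $2^{\omega}$ overheads into the $\epsilon$ exponent.'' Those overheads (the factor $d(m)\ll_{\epsilon}m^{\epsilon}$ counting ideals of norm $m$, the factor from $\sum_{m\mid M}m\,d(m)$, etc.) are $\ll_{\epsilon}N^{\epsilon}$, not $\ll_{\epsilon}\hat{N}^{\epsilon}$, and $N^{\epsilon}$ is \emph{not} $O(\hat{N}^{1+\epsilon})$ uniformly in $N$: take $N=p^{j}$ with $p$ fixed and $j\to\infty$ (precisely the regime this paper needs), where $\hat{N}=p$ stays bounded while $N^{\epsilon}=p^{j\epsilon}\to\infty$. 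Concretely, your chain gives $\sum_{D}h_K(N/D)^{1+\epsilon}\ll_{\epsilon}\hat{N}^{1/2+\epsilon}\cdot\hat{N}^{\epsilon}\cdot N^{1+\epsilon}$, i.e.\ a bound of the shape $N^{1+\epsilon}\hat{N}^{1/2+2\epsilon}$, which does not imply $N\cdot\hat{N}^{1+\epsilon}$; the two bounds are incomparable, and for prime-power levels the stated one is strictly stronger. To close this you must show that every sub-polynomial loss is controlled by powers of the \emph{radical} $\hat{N}$ rather than of $N$ --- for instance $\sum_{m\mid M}m\le M\prod_{p\mid M}(1-1/p)^{-1}\ll_{\epsilon}M\hat{M}^{\epsilon}$ handles one such factor --- but the multiplicity of ideals of a given norm at split primes produces genuine divisor-type factors that are not bounded by powers of $\hat{N}$, so this step needs an actual argument (a sharper accounting of which characters yield distinct eigenforms, or a restriction to primitive conductors of exact level), not a relabeling of $\epsilon$'s.
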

	
	The authors in \cite{chandrasheel-sudipa2} have improved the above theorem when $N=p^n$ for a prime $p$.
	
	\begin{proposition}\label{improvment}
		Let $k\geq 2$ and $i\in \mathbb{N}$. We have,
		$$|C_k(p^i)|\ll_{\epsilon} p^{i+\frac{1}{2}+\epsilon} ~~~~~~~\text{ as } \epsilon \to 0, p \to \infty.$$
	\end{proposition}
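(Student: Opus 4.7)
The strategy is to enumerate the CM eigenforms in $S_k(\Gamma_1(p^i))$ directly via their underlying Hecke characters, exploiting the prime-power nature of the level to restrict severely both the imaginary quadratic field and the conductor. First, since a CM form coming from $K=\Q(\sqrt{-d})$ (with $d$ a positive square-free integer) has minimal level $|d_K|\cdot N_{K/\Q}(\mathfrak{f})$, where $\mathfrak{f}$ is the conductor of the underlying Hecke character, the hypothesis that the level divides $p^i$ forces $|d_K|$ to be a power of $p$. A case analysis on the formula for the discriminant of an imaginary quadratic field shows that, for odd $p$, this pinpoints $K=\Q(\sqrt{-p})$ (which exists iff $p\equiv 3\pmod 4$) with $|d_K|=p$; for $p=2$ one obtains the bounded set $\{\Q(i),\Q(\sqrt{-2})\}$. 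So, up to a $p$-independent list, the field is forced.

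Next I would parameterize the admissible Hecke characters. Since $p$ ramifies in $K=\Q(\sqrt{-p})$ as $p\mathcal{O}_K=\mathfrak{p}^2$ with $N(\mathfrak{p})=p$, the conductor must take the form $\mathfrak{p}^a$ for some $0\le a\le i-1$, and the number of characters of modulus $\mathfrak{p}^a$ with fixed $\infty$-type is bounded by the size of the ray class group,
$$|\mathrm{Cl}_{\mathfrak{p}^a}(K)|\ll h_K\cdot |(\mathcal{O}_K/\mathfrak{p}^a)^\times|\ll h_K\cdot p^a.$$
Combining the Minkowski estimate with the standard (Brauer--Siegel type) class number bound yields $h_K\ll |d_K|^{1/2+\epsilon}=p^{1/2+\epsilon}$, so the number of CM newforms of minimal level $p^{a+1}$ is at most $p^{a+1/2+\epsilon}$. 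Summing over $0\le a\le i-1$, and absorbing the $O(i)$ oldform multiplicity at level $p^i$ into a $p^\epsilon$ factor (valid for fixed $i$),
$$|C_k(p^i)|\ll_\epsilon \sum_{a=0}^{i-1}p^{a+1/2+\epsilon}\ll_\epsilon p^{i+1/2+\epsilon},$$
as required.

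The main technical obstacle I anticipate lies in being precise about what is being counted. Primitive Hecke characters must be counted modulo the involution $\zeta\mapsto\zeta\circ c$, where $c$ generates $\mathrm{Gal}(K/\Q)$, since $\zeta$ and $\zeta\circ c$ induce the same cusp form; and only those $\zeta$ whose archimedean component produces weight exactly $k$ should appear. Employing the ray class group exact sequence to separate characters of conductor exactly $\mathfrak{p}^a$ from those of strictly smaller conductor is routine but needed to prevent double counting across levels. Finally, the exceptional cases $p\in\{2,3\}$ and the vanishing regime $p\equiv 1\pmod 4$ (where no imaginary quadratic field of discriminant a power of $p$ exists and the count is $0$) require short separate verifications that do not affect the order of growth.
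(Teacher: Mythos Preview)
The paper does not itself prove this proposition; it is quoted without proof from \cite{chandrasheel-sudipa2}. Your approach is correct and is the natural one: the condition that the level be a power of $p$ forces $|d_K|\mid p^i$, which for odd $p$ singles out $K=\Q(\sqrt{-p})$ with $|d_K|=p$ (when $p\equiv 3\pmod 4$; for $p\equiv 1\pmod 4$ no such field exists and the count is zero), and then bounding the Hecke characters of conductor $\mathfrak{p}^a$ by $h_K\cdot|(\mathcal{O}_K/\mathfrak{p}^a)^\times|\ll p^{a+1/2+\epsilon}$ via the class-number estimate $h_{\Q(\sqrt{-p})}\ll p^{1/2+\epsilon}$ yields the result. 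This is essentially the argument in the cited reference as well; the $p^{1/2}$ saving over Ambi's general bound $N\hat N^{1+\epsilon}$ comes precisely from isolating the unique field and replacing a divisor-count by the Brauer--Siegel input.

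Two minor remarks. Your parenthetical ``which exists iff $p\equiv 3\pmod 4$'' is imprecise: $\Q(\sqrt{-p})$ always exists, but its discriminant equals $-p$ (rather than $-4p$) iff $p\equiv 3\pmod 4$, which is what you need. Second, your geometric sum $\sum_{a=0}^{i-1}p^{a+1/2+\epsilon}$ is dominated by its top term and is actually $\ll p^{i-1/2+\epsilon}$, a full power of $p$ below the stated bound; the slack in the proposition absorbs the oldform multiplicity and any lack of uniformity in $i$, so you are safely inside the claim.
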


	\subsection{The Level Structure :}  \label{level-structure}
	Consider the algebraic group ${\rm GL}_m$. For a finite prime $p$ of $\Q$, let $\Z_p$ be the ring of $p$-adic integers. For each integer $r \geq 0$, define
	$$ K_p^m(r):= \{ x=(x_{i,j})_{m\times m} \in {\rm GL}_m(\mathbb{Z}_p) : x_{m,k}\in p^r\mathbb{Z}_p, 1\leq k<m, ~x_{m,m}-1 \in p^r\Z_p \}.$$
	
	Let $\mathbb{A}_f$ denote the finite part of ad{\`e}les over $\Q$. Let $N$ be a positive integer with prime factorization $N =  \prod \limits_{1 \leq i \leq s, p_i | N}p_i^{n_i}$. Define a compact open subgroup $K_f^m(N) = \prod \limits_p K_p$ of ${\rm GL}_m(\mathbb{A}_f)$ where 
	$$K_p =
	\begin{cases} 
		& K_{p_i}^m{(n_i)} \quad \text{if} ~ p \mid N~\text{i.e.},~ p = p_i  \\
		& {\rm GL}_m(\mathbb{Z}_p) \quad \text{if}~ p \nmid N.
	\end{cases}
	$$
	For each $N \ge 1$, the compact open subgroup $K_f^m(N) \subseteq {\rm GL}_m (\mathbb A_f)$ is called the level structure corresponding to $N$.

	\section{Relation between conductors}\label{relation between conductors}
	\subsection{Conductor of characters and representations :}
	Let $K$ be a non-archimedean local field of characteristic zero. Let $\mathcal{O}_K, \mathfrak{p}_K, \kappa_{K}$ be the ring of integers, the maximal ideal and the residue field of $K$ respectively.	
	\begin{defn}
		Let $\chi$ be a multiplicative character of $K^\times$. The conductor $c(\chi)$ of $\chi$ is the smallest positive integer $i$ such that $\chi_{|_{1+{\mathfrak{p}_K^i}}}=1$. For two multiplicative characters $\chi_1, \chi_2$  of $K^\times$, we have $c(\chi_1 \chi_2) \leq \text{max}\{c(\chi_1), c(\chi_2)\}$.
	\end{defn}
	
	Let $(\rho, \mathcal{H})=  (\bigotimes_{p\leq \infty}\rho_p, \bigotimes_{p\leq \infty} \mathcal{H}_p )$ be an irreducible automorphic representation of ${\rm GL}_m (\mathbb A_\Q)$. For each finite prime $p$, consider the space $$\mathcal{H}_p^{K_p^m(r)}:= \{v \in \mathcal{H}_p:x\cdot v=v \text{ } ~~~\forall v \in K_p^m(r) \}.$$
	\begin{defn}
		The conductor of $\rho_p$ is the smallest positive integer $c(\rho_p)$ such that the space $\mathcal{H}_p^{K_p^m(c(\rho_p))}$ is non-zero. The conductor of $\rho$ is defined as $N_\rho=\prod_{p<\infty}p^{c(\rho_p)}$.
	\end{defn}
	Note that, $N_\rho$ is finite since $c(\rho_p)=0$ for almost all prime $p$. \\
	
	Let $\epsilon$ be a Dirichlet character modulo $N \in \mathbb{N}$ and $f\in S_k(\Gamma_0(N),\epsilon)$. Write $N=p^{N_p}N'$ with $(p,N')=1$ and $\epsilon= \epsilon_p\cdot \epsilon'$ where $\epsilon_p$ is the $p$-part of $\epsilon$ and $\epsilon'$ is the prime to $p$-part. We have $c(\epsilon_p)\leq N_p$.  Let $\omega$ be the adelization of $\epsilon$. Then the central character of the cuspidal representaion $\pi_{f}$ attached to $f$ is $\omega$. Let $\omega_p$ be the restriction of $\omega$ to $\Q_p^\times$. For $x \in \Q_p^\times$, let $[x]= (1,\cdots, 1, x, 1,\cdots)$ denotes the corresponding element of $x$ in $\A_\Q^\times$ ($x$ is in $p$-th position). Write $x=p^iu$ where $i\in \Z$ and $u\in \mathbb{Z}_p^\times$. Then $\omega_p$ is given by the following formula: 
	\begin{equation}\label{adelization}
		\omega_p([p^iu])= \epsilon'(p)^i \epsilon_p^{-1}(u).
	\end{equation}
    This shows that the restriction of $\omega_p$ to $\Z_p^\times$ is the inverse of the character  $\epsilon_p$ on $\Z_p^\times$ and  $c(\omega_p)=c(\epsilon_p)$ \cite[\S 2]{loeffler}.

	Now we write down the local parameter of $\pi_p:= \pi_{f,p}$ , the local representation attached to $f$ at $p$ and ${\mathrm{sym}}^n(\pi_p)$ to calculate the conductor relation of these representations.

	\subsection{Local Parameter of $\pi_p$ :}\label{localparametrofpip} Suppose the residue field $\kappa_K$ is of characteristic $p > 0$ and let $q = |\kappa_K|$. Let $W(K)$ be the Weil group of $K$. Consider a finite dimensional vector space $V$ and a continuous homomorphism i.e. a representation of $W(K)$ given by $$\phi : W(K) \rightarrow {\rm GL}(V).$$
	
	%Let $\omega$ be a character of $W_K$ defined by \[g: x \ma-psto x^{\omega(g)} \quad \forall~ x \in \overline{\kappa}.\]
	Let $N$ be a nilpotent endomorphism of $V$ satisfying
	$$\phi(g)N\phi(g)^{-1}= w(g)N \quad \forall ~g \in W(K),$$ where $w$ is the unramified quasicharacter giving the action of $W(K)$ on the roots of unity, then the pair $(\phi,N)$ is said to be Weil-Deligne representation. See \cite{tate} for more details. We will mostly consider the case when $K=\Q_p$.
	
	By the local Langlands correspondence, a two dimensional Weil-Deligne representation $(\phi,N)$ corresponds uniquely to an irreducible admissible representation $\pi_p$ of ${\rm GL}_2(\Q_p)$ and vice-versa. Furthermore, the equivalence preserves $L$-functions, $\epsilon$- factors and conductors (see \cite{kudla} and \cite{kutzko}). We refer to $(\phi,N)$ as the local parameter (or the $L$-parameter) of $\pi_p$.

	\begin{enumerate}

		\item Let $\pi_p=\pi(\mu_1,\mu_2)$ be a principal series representation where $\mu_1\mu_2^{-1}=|\cdot|^{\pm1}$. Then the $L$-parameter of $\pi_p$ is given by 
		\begin{equation}
			\phi(x)= \begin{bmatrix}
				\mu_1(x) & \\ & \mu_2(x)	\end{bmatrix} , \, x\in W(\Q_p) \text{ and } N=0.
		\end{equation}
		
		\item Let $\pi_p= \mu\otimes {\rm St}_{{\rm GL}_2}$ be a special representation where $\mu$ is a character of $K^\times$. The $L$-parameter of $\pi_p$ is given by \begin{equation}
			\phi(x)= \begin{bmatrix}
				\mu(x)|x|^{\frac{1}{2}} & \\ & \mu(x)|x|^{-\frac{1}{2}}	\end{bmatrix} , \, x\in W(\Q_p) \text{ and } N=\begin{bmatrix}
				0 & 1 \\ 0 & 0
			\end{bmatrix}.
		\end{equation}
		
		\item The local parameter of a supercuspidal representation is an irreducible representation of $W(\Q_p)$ with $N=0$.
	\end{enumerate}

	\subsection{Local parameter of $\sym^n(\pi_p)$ and conductor relations :} The $n$th symmetric power of the standard representation of ${\rm GL}_2(\mathbb{C})$ is equivalent to the action of ${\rm GL}_2(\mathbb{C})$ on the vector space of homogeneous polynomials of degree $n$. We use its matrix representation to calculate the $L$-parameter of ${\rm sym}^n(\pi_p)$.
	
	Suppose the $L$-parameter of ${\rm sym}^n(\pi_p)$ is denoted by $({\rm sym}^n(\phi), N')$. For a principal series representation $\pi_p$, we have
	\begin{equation} \label{prin}
		{\rm sym}^n(\phi)(x)=
		\begin{bmatrix}
			\mu_1^n(x) & & & \\ & \mu_1^{n-1}(x)\mu_2(x) & &\\ & & \ddots & \\ & & & \mu_2^n(x)
		\end{bmatrix} \, x\in W(\Q_p) \text{ and } N'=0.
	\end{equation}
	
	%	This gives, \begin{equation}
		%	c({\rm sym}^n(\phi))= \sum_{\substack{i+j=n\\ i,j \in \{ 0,\cdots, n\}}}c(\mu_1^i\mu_2^j) \leq n\cdot c(\phi).
		%	\end{equation}
	
	If $\pi_p$ is a special representation of ${\rm GL}_2(\Q_p)$, then we have	\begin{equation} \label{special}
		{\rm sym}^n(\phi)(x)=
		\begin{bmatrix}
			\mu^n(x)|x|^{\frac{n}{2}} & & & \\ & & \ddots & \\ & & & 	\mu^n(x)|x|^{-\frac{n}{2}}
		\end{bmatrix} \, x\in W(\Q_p) \text{ and } N'=\begin{bmatrix}
			& & & 1 \\ & & 0 &\\ & \iddots & & \\ 0 & & &
		\end{bmatrix}.
	\end{equation}
	
	From \eqref{prin} and \eqref{special}, we can easily deduce the conductors of ${\rm sym}^n(\pi_{p})$ when $\pi_{p}$ is either principal series representation or special representation. With the notations as above, we have the following.
	\begin{lemma} \label{lem1}
		\begin{enumerate}
			\item 
			If $\pi_p$ is a principal series representation, then 
			\[c({\rm sym}^n(\phi)) %= \sum_{\substack{i+j=n\\ i,j \in \{ 0,\cdots, n\}}}c(\mu_1^i\mu_2^j) 
			\leq n\cdot c(\phi).\]
			\item 
			If $\pi_p$ is a special representation, then 
			\[c({\rm sym}^n(\phi))= \begin{cases}
			n & \text{ if } \mu \text{ unramified } \\
			(n+1)\cdot c(\mu^n) & \text{ if } \mu \text{ ramified. }
			\end{cases} \]
		\end{enumerate}
	\end{lemma}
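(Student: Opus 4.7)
The strategy is to pass through the Weil--Deligne side and invoke the standard formula for the local Artin conductor of a Weil--Deligne representation $(V,\phi,N)$ of $W(\mathbb{Q}_p)$,
\[
a(V,\phi,N) \;=\; a(\phi) \;+\; \dim V^{I_{\mathbb{Q}_p}} \;-\; \dim\bigl((\ker N)^{I_{\mathbb{Q}_p}}\bigr),
\]
together with the additivity $a(\phi)=\sum_j c(\chi_j)$ whenever $\phi=\bigoplus_j \chi_j$ is a direct sum of quasi-characters of $W(\mathbb{Q}_p)$. Under local Langlands this agrees with the conductor $c(\sym^n(\phi))$ of the associated admissible representation, so the whole computation reduces to identifying the character factors of $\sym^n(\phi)$, their individual conductors, and, in the presence of monodromy, the dimensions of $V^{I_{\mathbb{Q}_p}}$ and $\ker N'$.

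For part (1), formula \eqref{prin} exhibits $\sym^n(\phi)$ as the direct sum of the $n+1$ quasi-characters $\chi_i:=\mu_1^{\,n-i}\mu_2^{\,i}$, $0\le i\le n$, with $N'=0$. Hence $c(\sym^n(\phi))=\sum_{i=0}^{n}c(\chi_i)$. Combining the two elementary bounds $c(\chi^{k})\le c(\chi)$ and $c(\chi_1\chi_2)\le\max\{c(\chi_1),c(\chi_2)\}$ gives $c(\chi_0)=c(\mu_1^n)\le c(\mu_1)$, $c(\chi_n)\le c(\mu_2)$, and $c(\chi_i)\le\max\{c(\mu_1),c(\mu_2)\}\le c(\mu_1)+c(\mu_2)$ for $1\le i\le n-1$. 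Summing and using that for a principal series $c(\phi)=c(\mu_1)+c(\mu_2)$, one obtains $c(\sym^n(\phi))\le n\cdot c(\phi)$.

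For part (2), formula \eqref{special} writes $\sym^n(\phi)=\bigoplus_{i=0}^{n}\chi_i$ with $\chi_i=\mu^{n}\cdot|\cdot|^{(n-2i)/2}$. Because $|\cdot|^{s}$ is unramified, every $\chi_i$ has conductor $c(\mu^{n})$, so $a(\sym^n(\phi))=(n+1)\,c(\mu^{n})$. To compute the monodromy contribution one notes that $N'$ is the derivation on $\mathrm{Sym}^{n}(\mathbb{C}^{2})$ induced by the principal nilpotent $N=\bigl(\begin{smallmatrix}0&1\\0&0\end{smallmatrix}\bigr)$ on $\mathbb{C}^{2}$; in the monomial basis this is a single Jordan block of size $n+1$, so $\dim\ker N'=1$. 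If $\mu$ is unramified every $\chi_i$ is unramified, $V^{I_{\mathbb{Q}_p}}=V$, and the displayed formula yields $0+(n+1)-1=n$. If $\mu$, and hence $\mu^n$, is ramified then every $\chi_i$ is ramified, so $V^{I_{\mathbb{Q}_p}}=0$ and the monodromy term vanishes, giving $(n+1)\,c(\mu^{n})$.

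The main technical point is the monodromy bookkeeping in part (2): one must recognise $N'$ as a regular nilpotent of size $n+1$ (so that $\ker N'$ is one-dimensional) and keep track of the implicit convention that, in the ramified sub-case, $\mu^{n}$ is also ramified so that $V^{I_{\mathbb{Q}_p}}$ genuinely vanishes. Part (1) is then otherwise a straightforward combinatorial estimate built on the submultiplicativity of the conductor of characters already recorded in the excerpt.
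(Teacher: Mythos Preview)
Your argument is correct and follows the same underlying route as the paper: both parts pass to the Weil--Deligne side, decompose $\sym^n(\phi)$ as a sum of characters, and sum conductors.

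For part (1) you are in fact slightly more careful than the paper: you separate the two extreme terms $c(\mu_1^n)\le c(\mu_1)$ and $c(\mu_2^n)\le c(\mu_2)$, so that together they contribute at most $c(\phi)$, while the remaining $n-1$ mixed terms each contribute at most $c(\phi)$; this is what actually yields $n\cdot c(\phi)$ rather than the naive $(n+1)\cdot c(\phi)$. The paper simply asserts $c(\mu_1^i\mu_2^j)\le c(\phi)$ and concludes, leaving this refinement implicit.

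For part (2) the paper defers entirely to Rohrlich's conductor formula for twisted Steinberg representations, whereas you unpack that formula directly via $a(V,\phi,N)=a(\phi)+\dim V^{I}-\dim(\ker N)^{I}$ and the observation that $N'$ is a single Jordan block. Your explicit computation has the advantage of making the edge case visible: you correctly note that the ramified sub-case tacitly assumes $\mu^n$ is still ramified. Indeed, if $\mu$ is ramified but $\mu^n$ is unramified then every $\chi_i$ is unramified and the conductor is $n$, not $(n+1)c(\mu^n)=0$, so the dichotomy in the statement should really be on $\mu^n$. This does not affect the paper's later application (only the upper bound $c(\sym^n(\phi))\le (n+1)c(\phi)$ is used), but your flag is warranted.
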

	\begin{proof}
		If $\pi_p$ is a principal series representation, then from Equ. \ref{prin} we have that 
		\begin{equation} \label{prin1}
			c({\rm sym}^n(\phi))= \sum_{\substack{i+j=n\\ i,j \in \{ 0,\cdots, n\}}}c(\mu_1^i\mu_2^j).
		\end{equation} Note that $c(\phi)=c(\mu_1)+c(\mu_2)$. Therefore, we deduce $c(\mu_1^i\mu_2^j) \leq c(\phi)$ and hence from \eqref{prin1} it follows that $c({\rm sym}^n(\phi)) \leq n \cdot c(\phi)$. If $\pi_p$ is a special representation, using \cite[Section 10, Prop.]{rohrlich} and Equ. \ref{special} we obtain the desired result.
	\end{proof}
	
	Now, we assume that $\pi_p$ is a supercuspidal representation of ${\rm GL}_2(\Q_p)$ with $p$ odd. If $(\phi,0)$ is the local parameter of $\pi_p$, then $\phi= {\rm Ind}_{W(K)}^{W(\Q_p)}(\eta)$ where $K$ is a quadratic extension of $\Q_p$ with $\eta \neq \eta^\sigma, \sigma \in W(\Q_p)\backslash W(K)$ and $\eta^\sigma(x)=\eta(\sigma x \sigma^{-1}), x\in W(K)$. If the conductor of $\eta$ is $c(\eta)$, then the conductor $c(\phi)$ of $\phi$ is given by
	\begin{equation} \label{indconductor}
		c(\phi)= {\rm dim}({\eta})~v(d_{K/\Q_p}) + f_{K/\Q_p}~ c(\eta),
	\end{equation}
	where $d_{K/\Q_p}$ is the discriminant of the field extension $K/\Q_p$ and $f_{K/\Q_p}$ is the residual degree (see \cite{serre} for more details). 
	
	With a suitable basis, $\phi$ has following matrix form: 
	\begin{equation}
		\phi(x)= \begin{bmatrix}
			\eta(x) & \\ & \eta^\sigma(x)	\end{bmatrix} , \, x\in W(K) \text{ and } \phi(\sigma)= \begin{bmatrix}
			& 1 \\ \eta(\sigma^2) & 
		\end{bmatrix}.
	\end{equation}
	Then,
	\begin{equation}
		{\rm sym}^n(\phi)(x)= \begin{bmatrix}
			\eta^n(x) & & & \\ & \eta^{n-1}(x)\eta^\sigma(x) & & \\ & & \ddots &\\ & & & (\eta^\sigma)^n(x)
		\end{bmatrix} \text{ and } 
	\end{equation} 
	\begin{equation}
		{\rm sym}^n(\phi)(\sigma)= \begin{bmatrix}
			& & & 1 \\ & & \eta^\sigma(\sigma^2) & \\ & \iddots & & \\ (\eta^\sigma)^n(\sigma^2)		\end{bmatrix}.
	\end{equation}
	From the above two equations, we obtain that
	\begin{proposition}\label{symndirectsum}
		If $\pi_p$ is a supercuspidal representation with local parameter $(\phi,0)$, then $\sym^n(\phi)$ is given as follows.
		\begin{enumerate}
			\item 
			If $n$ is odd, then ${\rm sym}^n(\phi)$ is a sum of $\frac{n+1}{2}$ number of two dimensional representations, i.e.
			\begin{equation}\label{n odd}
			{\rm sym}^n(\phi)= \bigoplus_{i=0}^{{(n-1)}/2}{\rm Ind}_{W(K)}^{W(\Q_p)}(\eta^{n-i}(\eta^\sigma)^i).
			\end{equation}
			\item 
			If $n$ is even, then 
			\begin{equation}\label{n even}
			{\rm sym}^n(\phi)= \bigoplus_{i=0}^{n/2-1}{\rm Ind}_{W(K)}^{W(\Q_p)}(\eta^{n-i}(\eta^\sigma)^i) \oplus \eta^{\frac{n}{2}}(\eta^{\sigma})^{\frac{n}{2}}.
			\end{equation}
		\end{enumerate}
	\end{proposition}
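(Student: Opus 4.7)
The plan is to study $\sym^n(\phi)$ by first restricting to the index-two subgroup $W(K)\subset W(\Q_p)$, where the description is completely diagonal, and then tracking the action of the chosen coset representative $\sigma$. Since $\phi|_{W(K)}=\eta\oplus\eta^\sigma$, the symmetric power $\sym^n(\phi)|_{W(K)}$ is diagonal on the standard basis $\{v_1^{n-i}v_2^i : 0\leq i\leq n\}$ of $\sym^n(V)$, with the $i$-th basis vector transforming by the character $\eta^{n-i}(\eta^\sigma)^i$. This already displays every character that has to be reassembled into $W(\Q_p)$-summands.

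Using the matrix form of $\phi(\sigma)$ given just before the statement, a direct computation shows that $\sym^n(\phi)(\sigma)$ sends $v_1^{n-i}v_2^i$ to a nonzero scalar multiple of $v_1^i v_2^{n-i}$. Consequently, for each $i$ with $0\leq i<n/2$, the two-dimensional subspace
\[
W_i=\mathrm{span}\{v_1^{n-i}v_2^i,\, v_1^i v_2^{n-i}\}
\]
is stable under the whole of $W(\Q_p)$. On $W_i$ the subgroup $W(K)$ acts diagonally by the pair of characters $\eta^{n-i}(\eta^\sigma)^i$ and its $\sigma$-conjugate $\eta^i(\eta^\sigma)^{n-i}$, while $\sigma$ permutes these two lines. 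This is precisely the standard model of ${\rm Ind}_{W(K)}^{W(\Q_p)}\bigl(\eta^{n-i}(\eta^\sigma)^i\bigr)$, so Frobenius reciprocity (equivalently, the obvious intertwiner from the standard induced module) identifies $W_i$ with that induced representation.

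When $n$ is even, the middle vector $v_1^{n/2}v_2^{n/2}$ is already an eigenvector for $\sigma$, and the restriction to $W(K)$ of the resulting one-dimensional $W(\Q_p)$-invariant subspace is the character $\eta^{n/2}(\eta^\sigma)^{n/2}$, which is manifestly $\sigma$-invariant (swapping $\eta$ and $\eta^\sigma$ leaves it unchanged) and therefore extends to $W(\Q_p)$. A dimension count then closes the argument: $\tfrac{n+1}{2}\cdot 2=n+1$ in the odd case, and $\tfrac{n}{2}\cdot 2+1=n+1$ in the even case, matching $\dim\sym^n(\phi)=n+1$ and showing the direct sum exhausts the representation.

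The point requiring the most care is bookkeeping. One has to verify that each paired subspace $W_i$ really realises the stated induced representation and not a twist of it, and that the scalar by which $\sigma$ acts on the middle line (when $n$ is even) is the one compatible with the natural extension of $\eta^{n/2}(\eta^\sigma)^{n/2}$ to $W(\Q_p)$. Both are routine consequences of the explicit matrix for $\phi(\sigma)$ combined with the Mackey-type description of induction from an index-two subgroup, but the notation is mildly treacherous because $\sigma$ plays the double role of coset representative in $W(\Q_p)/W(K)$ and of the nontrivial element of $\gal(K/\Q_p)$ acting on characters of $W(K)$.
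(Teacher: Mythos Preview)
Your proof is correct and follows essentially the same approach as the paper: the paper simply displays the diagonal matrix for $\sym^n(\phi)(x)$, $x\in W(K)$, and the anti-diagonal matrix for $\sym^n(\phi)(\sigma)$, then states the proposition with the phrase ``From the above two equations, we obtain that''. Your argument spells out exactly what that deduction involves---pairing the basis vectors $v_1^{n-i}v_2^i$ and $v_1^{i}v_2^{n-i}$ into $W(\Q_p)$-stable planes, recognising each plane as the stated induced representation, and handling the middle line when $n$ is even---which is precisely the intended reading of the paper's terse justification.
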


	%\begin{equation}\label{n odd}
	%{\rm sym}^n(\phi)= {\rm Ind}_{W(F)}^{W(K)}(\eta^n) \oplus {\rm Ind}_{W(F)}^{W(K)}(\eta^{n-1}\eta^\sigma) \oplus \cdots \oplus{\rm Ind}_{W(F)}^{W(K)}(\eta^{\frac{n+1}{2}}(\eta^\sigma)^{\frac{n-1}{2}})
	%\end{equation}

	%\begin{equation}\label{n even}
	%{\rm sym}^n(\phi)= {\rm Ind}_{W(F)}^{W(K)}(\eta^n) \oplus {\rm Ind}_{W(F)}^{W(K)}(\eta^{n-1}\eta^\sigma) \oplus \cdots \oplus{\rm Ind}_{W(F)}^{W(K)}(\eta^{\frac{n}{2}+1}(\eta^\sigma)^{\frac{n}{2}-1}) \oplus \eta^{\frac{n}{2}}(\eta^{\sigma})^{\frac{n}{2}}
	%\end{equation}
	
    Let $f\in S_k(\Gamma_0(N),\epsilon)$ and $p \mid N$ be an odd prime such that $\pi_p := \pi_{f,p}$ is a supercuspidal representation, that is, if $(\phi,0)$ is the local parametr of $\pi_p$, then $\phi = {\rm Ind}_{W(K)}^{W(\Q_p)}(\eta)$ with $K/\Q_p$ quadratic. Then using Equ. \ref{indconductor},  we have
	\begin{equation}\label{supconductor}
	c(\pi_p)= N_p = \begin{cases}
		2c(\eta) & \text{ if } K/\Q_p \text{ is unramified }\\
		1+c(\eta) & \text{ if } K/\Q_p \text{ is ramified. }
	\end{cases} 
    \end{equation} 
	The central character of $\pi_{p}$ is $\eta|_{\Q_p^\times} \cdot \omega_{K/\Q_p}$ where $\omega_{K/\Q_p}$ is the quadratic character of $\Q_p^\times$ associated to the quadratic extension $K/\Q_p$ such that it is trivial on the norm group $N_{K/\Q_p}(K^\times)$. %${\omega_{K/\Q_p}}_{|_{N_{K/\Q_p}(K^\times)}} = 1$. 
	%Using Equ. \ref{adelization}, we have  $\eta|_{\Q_p^\times} \cdot \omega_{K/\Q_p}=\epsilon_p^{-1}$ on $\Z_p^\times$. Now, evaluating $\eta$ at $N_{K/\Q_p}(x)$ for $x \in \mathcal{O}_K^\times$, we deduce that $\eta^\sigma=\eta^{-1}\cdot \epsilon_p^{-1}\circ N_{K/\Q_p}  \ \text{ on } \mathcal{O}_K^\times$. Writing $\epsilon_p'= \epsilon_p^{-1}\circ N_{K/\mathbb{Q}_p}$, we have  
	%\begin{equation} \label{central} 
	%	\eta^\sigma=\eta^{-1} \epsilon_p' \ \text{ on } \mathcal{O}_K^\times.
	%\end{equation}
	
	Recall, $\omega_p$ be the $p$-part of the central character $\omega$ of $\pi_{f}$. From now on we identify $\omega_p$ as a character of $\Q_p^\times$ since $\Q_p^\times$ is isomorphic to a subgroup of $\A_\Q^\times$ under the map $x \mapsto [x]$ which is discussed before \S \ref{localparametrofpip}. Hence we have $\eta|_{\Q_p^\times} \cdot \omega_{K/\Q_p}=\omega_p$ on $\Q_p^\times$. Now, evaluating at $N_{K/\Q_p}(x)$ for $x \in K^\times$, we deduce that $\eta^\sigma=\eta^{-1}\cdot \omega_p\circ N_{K/\Q_p}  \ \text{ on } K^\times$. Writing $\omega_p'= \omega_p \circ N_{K/\mathbb{Q}_p}$, we have  
	\begin{equation} \label{central} 
		\eta^\sigma=\eta^{-1} \omega_p' \ \text{ on } K^\times.
	\end{equation}

	Next, we determine the situation when the representations involved in the direct sums of Prop. \ref{symndirectsum} will be irreducible and isomorphic.
	\begin{lemma}
		Let $i,j,k,l\geq 0$ with $i+j=k+l=n$. Then we have the following:
		\begin{enumerate}
			\item ${\rm Ind}_{W(K)}^{W(\Q_p)}(\eta^{i}(\eta^\sigma)^j)$ is irreducible if and only if $\eta^{2(i-j)}\neq \omega_p'^{(i-j)}$ on $K^\times$. 
			
			\item ${\rm Ind}_{W(K)}^{W(\Q_p)}(\eta^{i}(\eta^\sigma)^j) \cong {\rm Ind}_{W(K)}^{W(\Q_p)}(\eta^{k}(\eta^\sigma)^l)$ if either $\eta^{2(i+k-n)} = (\omega_p')^{i+k-n}$ on  $K^\times$ or $\eta^{2(i-k)} \linebreak = (\omega_p')^{i-k}$ on $K^\times$.
		\end{enumerate}
		
	\end{lemma}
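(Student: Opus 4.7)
The plan is to reduce both statements to the standard Mackey criteria for induced representations from the index-two subgroup $W(K) \subset W(\Q_p)$, and then translate the resulting conditions via the relation $\eta^\sigma = \eta^{-1}\omega_p'$ established in \eqref{central}. The key classical facts I will invoke are: for a character $\chi$ of $W(K)$, the representation $\mathrm{Ind}_{W(K)}^{W(\Q_p)}(\chi)$ is irreducible iff $\chi \neq \chi^\sigma$; and two such induced representations $\mathrm{Ind}(\chi_1)$ and $\mathrm{Ind}(\chi_2)$ are isomorphic iff $\chi_2 \in \{\chi_1,\chi_1^\sigma\}$. Since $\sigma$ represents the non-trivial element of $\mathrm{Gal}(K/\Q_p)$, the involution $\chi\mapsto\chi^\sigma$ satisfies $(\chi^\sigma)^\sigma = \chi$, which I will use freely.

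For part (1), I would apply the irreducibility criterion to $\chi = \eta^i(\eta^\sigma)^j$. Its $\sigma$-conjugate is $(\eta^\sigma)^i \eta^j = \eta^j(\eta^\sigma)^i$, so irreducibility is equivalent to
\[
\eta^{i-j}(\eta^\sigma)^{j-i} \neq 1, \qquad \text{i.e. } \bigl(\eta/\eta^\sigma\bigr)^{i-j}\neq 1.
\]
Substituting $\eta^\sigma = \eta^{-1}\omega_p'$ from \eqref{central} gives $\eta/\eta^\sigma = \eta^2 (\omega_p')^{-1}$, so the condition becomes $\eta^{2(i-j)}\neq (\omega_p')^{i-j}$ on $K^\times$, as claimed.

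For part (2), I would consider the two possibilities $\eta^i(\eta^\sigma)^j = \eta^k(\eta^\sigma)^l$ and $\eta^i(\eta^\sigma)^j = \eta^l(\eta^\sigma)^k$ coming from the isomorphism criterion. In the first case, rewriting as $\eta^{i-k} = (\eta^\sigma)^{l-j}$ and using $i+j=k+l=n$ (so $l-j = i-k$) reduces to $(\eta/\eta^\sigma)^{i-k}=1$, which translates to $\eta^{2(i-k)} = (\omega_p')^{i-k}$. In the second case, the exponents $i-l$ and $k-j$ both equal $i+k-n$, giving $(\eta/\eta^\sigma)^{i+k-n}=1$, hence $\eta^{2(i+k-n)} = (\omega_p')^{i+k-n}$. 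The disjunction of these two cases is exactly the stated sufficient condition.

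The arithmetic is entirely routine once the Mackey criteria are in hand; the only place requiring care is the bookkeeping with the exponents, ensuring that the constraint $i+j=k+l=n$ is used to collapse pairs like $(l-j, i-k)$ or $(i-l, k-j)$ to a single integer, and tracking the inversion of $\omega_p'$ that appears when rewriting $\eta/\eta^\sigma$. There is no substantive obstacle: the lemma is a direct consequence of the relation \eqref{central} together with the structure of induced representations from a quadratic extension.
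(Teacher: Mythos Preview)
Your argument is correct and follows essentially the same route as the paper: both invoke the Mackey irreducibility/isomorphism criteria for induction from the index-two subgroup $W(K)$ and then substitute $\eta^\sigma=\eta^{-1}\omega_p'$ from \eqref{central} to rewrite the resulting character identities in terms of $\eta$ and $\omega_p'$. The only cosmetic difference is that the paper phrases part~(1) via the contrapositive (reducibility) and orders the two cases in part~(2) oppositely.
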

	
	\begin{proof}
		\begin{enumerate} 
			\item We prove this by contradiction. Let ${\rm Ind}_{W(K)}^{W(\Q_p)}(\eta^{i}(\eta^\sigma)^j)$ be reducible. This is true if and only if $\eta^{i}(\eta^\sigma)^j= (\eta^{i}(\eta^\sigma)^j)^\sigma$  on $K^\times$. Since $\sigma$ is an involution, we have that $\eta^{i}(\eta^\sigma)^j= (\eta^\sigma)^i\eta^{j}$  on $K^\times$. Now using Equ. \ref{central}, we get  $\eta^{2(i-j)}=\omega_p'^{(i-j)}$  on $K^\times$. This proves the first part of the lemma. %which is a contradiction. 
			
			\item Next, assume that ${\rm Ind}_{W(K)}^{W(\Q_p)}(\eta^{i}(\eta^\sigma)^j)$ is isomorphic to ${\rm Ind}_{W(K)}^{W(\Q_p)}(\eta^{k}(\eta^\sigma)^l)$. This holds true if and only if either $\eta^{i}(\eta^\sigma)^j = \eta^{k}(\eta^\sigma)^l$ or $\eta^{i}(\eta^\sigma)^j= (\eta^{k}(\eta^\sigma)^l)^\sigma$ on $K^\times$. By Equ. \ref{central}, these conditions reduce to $\eta^{2(i+k-n)} = (\omega_p')^{i+k-n}$ and $\eta^{2(i-k)} = (\omega_p')^{i-k}$ on $K^\times$ which completes the proof.
		\end{enumerate}
	\end{proof}
	\begin{remark}
	As $\ind_{W(K)}^{W(\Q_p)}(\eta)$ is irreducible, we have $\eta \neq \eta^\sigma$ and by Equ. \ref{central}, $\eta^2 \neq \omega_p'$ on $K^\times$. Thus, using the above lemma, we conclude that when $n$ is odd, ${\rm Ind}_{W(K)}^{W(\Q_p)}(\eta^{\frac{n+1}{2}}(\eta^\sigma)^{\frac{n-1}{2}})$ is always irreducible. Depending upon whether $\ind_{W(K)}^{W(\Q_p)}(\eta^{i}(\eta^\sigma)^j)$ is irreducible
	or reducible, we can classify the types of ${\rm sym}^n(\pi_{p})$ of a supercuspidal representation $\pi_{p}$. These types are determined using the variation of epsilon factors in \cite{bmm} when $n=3$.
	\end{remark}
	
	Next we prove a relation between $c({\rm sym}^n(\pi_p))$ and $c(\pi_p)$. Before we do that, let us recall the following useful lemma to compute the conductor of a character obtained by composing with the norm map.

	\begin{lemma}\cite[Lemma $1.8$]{tunnell}\label{conductor norm map}
		Let $K$ be a quadratic separable extension of $F$. Let $\chi$ be a quasicharacter of $F^\times$. Then $f_{K/F} c(\chi\circ N_{K/F})=c(\chi)+c(\chi\omega_{K/F})-c(\omega_{K/F})$, where $f_{K/F}$ denotes the residual degree .
	\end{lemma}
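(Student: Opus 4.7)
The plan is to deduce this identity from two classical ingredients: the behaviour of induction under the norm, and the conductor formula for induced representations already recorded in \eqref{indconductor}. First I would identify $\chi \circ N_{K/F}$, viewed as a character of $K^\times$, with the restriction $\chi|_{W(K)}$ of $\chi$ to the Weil group $W(K)$ via the compatibility of the reciprocity maps of $F$ and $K$. A short character computation (equivalently, the projection formula) then gives the isomorphism of $W(F)$-representations
\[
\ind_{W(K)}^{W(F)}(\chi \circ N_{K/F}) \ \cong\ \chi \otimes \ind_{W(K)}^{W(F)}(\mathbf{1}) \ \cong\ \chi \oplus (\chi \cdot \omega_{K/F}),
\]
using the standard decomposition $\ind_{W(K)}^{W(F)}(\mathbf{1}) \cong \mathbf{1} \oplus \omega_{K/F}$ attached to the quadratic extension.

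Next I would compute the Artin conductor of both sides of this isomorphism. By additivity of the conductor over direct sums, the right-hand side contributes $c(\chi) + c(\chi \cdot \omega_{K/F})$. Applying the induction formula \eqref{indconductor} to the one-dimensional character $\chi \circ N_{K/F}$, the left-hand side equals $v(d_{K/F}) + f_{K/F}\cdot c(\chi \circ N_{K/F})$. The conductor-discriminant formula for the quadratic extension $K/F$ identifies $v(d_{K/F})$ with $c(\omega_{K/F})$, since $\omega_{K/F}$ is the nontrivial character of $\gal(K/F)$ under local class field theory. Equating the two expressions and solving for $f_{K/F}\cdot c(\chi\circ N_{K/F})$ immediately yields the desired identity.

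The one step that requires genuine care is the first: verifying that under local class field theory the character $\chi \circ N_{K/F}$ corresponds to $\chi|_{W(K)}$. This is the commutativity of the square relating the reciprocity maps of $F$ and $K$ (restriction on the Weil-group side matches the norm on the multiplicative-group side), which is standard but must be handled with some attention in the wildly ramified case. Once this compatibility is in place, the remainder of the argument is essentially bookkeeping with the Artin conductor and its additivity, together with the conductor-discriminant formula for a quadratic extension.
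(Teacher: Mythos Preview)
Your argument is correct. Note, however, that the paper does not supply its own proof of this lemma: it is quoted verbatim from \cite[Lemma~1.8]{tunnell} and used as a black box. So there is no ``paper's proof'' to compare against here.

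That said, your route is exactly the standard one (and is essentially how Tunnell proves it). The key identity $\ind_{W(K)}^{W(F)}(\chi\circ N_{K/F})\cong \chi\oplus \chi\,\omega_{K/F}$ via the projection formula, combined with the conductor formula for induced representations \eqref{indconductor} and the conductor--discriminant identity $v(d_{K/F})=c(\omega_{K/F})$ for a quadratic extension, gives the result by straightforward bookkeeping. Your caveat about the class-field-theory compatibility (norm on $K^\times$ versus restriction on Weil groups) is well placed, though this is a foundational fact and not specific to the wildly ramified case.
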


	\begin{lemma} \label{etasigma}
		We have \[ c(\eta^\sigma) \leq \begin{cases}
			c(\pi_{p}) & \text{ if } K/\Q_p \text{ is unramified }\\
			2c(\pi_{p}) & \text{ if } K/\Q_p \text{ is ramified. }
		\end{cases} \]
	\end{lemma}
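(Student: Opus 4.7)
The idea is to read off $\eta^\sigma$ from \eqref{central} as the product $\eta^{-1}\omega_p'$ on $K^\times$. Using the bound $c(\chi_1\chi_2)\leq\max\{c(\chi_1),c(\chi_2)\}$ recorded in the definition of conductor, we immediately obtain $c(\eta^\sigma)\leq\max\{c(\eta),c(\omega_p')\}$. Since \eqref{supconductor} already pins down $c(\eta)$ as $c(\pi_p)/2$ in the unramified case and $c(\pi_p)-1$ in the ramified case, the whole proof reduces to bounding $c(\omega_p')=c(\omega_p\circ N_{K/\Q_p})$.

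To do that, I would apply Lemma~\ref{conductor norm map} with $F=\Q_p$ and $\chi=\omega_p$, which gives
\[
f_{K/\Q_p}\, c(\omega_p') \;=\; c(\omega_p) + c(\omega_p\,\omega_{K/\Q_p}) - c(\omega_{K/\Q_p}).
\]
The central character of $\pi_p$ satisfies $\omega_p=\eta|_{\Q_p^\times}\cdot\omega_{K/\Q_p}$, as recalled just before \eqref{central}, so subadditivity gives $c(\omega_p)\leq\max\{c(\eta|_{\Q_p^\times}),c(\omega_{K/\Q_p})\}$. The inclusion $1+p^i\Z_p\subseteq 1+\mathfrak{p}_K^i$, valid in both ramification types because $p\in\mathfrak{p}_K$, yields $c(\eta|_{\Q_p^\times})\leq c(\eta)$. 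Altogether, $c(\omega_p)\leq\max\{c(\eta),c(\omega_{K/\Q_p})\}$.

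With these pieces in hand, the two cases of the lemma settle routinely. If $K/\Q_p$ is unramified, $f_{K/\Q_p}=2$ and $c(\omega_{K/\Q_p})=0$, so the norm formula collapses to $c(\omega_p')=c(\omega_p)\leq c(\eta)=c(\pi_p)/2$, whence $c(\eta^\sigma)\leq c(\pi_p)$. If $K/\Q_p$ is ramified, $f_{K/\Q_p}=1$ and $c(\omega_{K/\Q_p})=1$ (here we use $p$ odd), so the same formula together with subadditivity on $c(\omega_p\,\omega_{K/\Q_p})$ gives $c(\omega_p')\leq 2\,c(\omega_p)\leq 2\bigl(c(\eta)+1\bigr)=2\,c(\pi_p)$, and consequently $c(\eta^\sigma)\leq 2\,c(\pi_p)$.

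The main obstacle is the ramified case: the fact that $\omega_{K/\Q_p}$ itself is ramified of conductor $1$, combined with $f_{K/\Q_p}=1$, is precisely what forces the factor of $2$ in the conclusion, and one has to verify that the chain of subadditivity estimates above does not leak any more. The restriction inequality $c(\eta|_{\Q_p^\times})\leq c(\eta)$ is the place where a careless reading of the filtrations on $K^\times$ versus $\Q_p^\times$ could cause problems, so I would double-check that step explicitly using $p\mathcal{O}_K=\mathfrak{p}_K^{e(K/\Q_p)}$ before combining everything.
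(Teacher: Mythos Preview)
Your argument is correct and follows the same overall route as the paper: start from \eqref{central} to get $c(\eta^\sigma)\leq\max\{c(\eta),c(\omega_p')\}$, then control $c(\omega_p')$ via Lemma~\ref{conductor norm map} and finish with \eqref{supconductor}. The one genuine difference is how you bound $c(\omega_p)$. The paper simply invokes the global input $c(\omega_p)=c(\epsilon_p)\leq N_p=c(\pi_p)$, established in the adelization paragraph preceding \S\ref{localparametrofpip}; you instead rederive this locally from $\omega_p=\eta|_{\Q_p^\times}\cdot\omega_{K/\Q_p}$ together with the filtration inclusion $1+p^i\Z_p\subseteq 1+\mathfrak{p}_K^i$. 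Your route is slightly longer but self-contained at the level of local data, whereas the paper's one-line bound leans on the fact that $\pi_p$ arises from a newform of level $p^{N_p}$. Either way the numerics in the ramified case come out the same, and your caution about the restriction inequality is well placed but unnecessary: the inclusion you cite is valid in both ramification types.
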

    \begin{proof}
    	%The conductor of any multiplicative character depends only on its value on the unit group of the ring of integers, hence from Equ. \ref{central}, it follows that $c(\eta^\sigma) \leq {\mathrm{max}}\{c(\eta),c(\epsilon_p')\}$. Using Lemma \ref{conductor norm map}, we have $c(\epsilon_p')=c(\epsilon_p)$ if $K/\Q_p$ is unramified; and $c(\epsilon_p)+c(\epsilon_p\omega_{K/\Q_p})-1$ if $K/\Q_p$ is ramified. As $c(\epsilon_p) \leq N_p$, using Equ.  \ref{supconductor} we complete the proof of the lemma.
        From Equ. \ref{central}, it follows that $c(\eta^\sigma) \leq {\mathrm{max}}\{c(\eta),c(\omega_p')\}$. Using Lemma \ref{conductor norm map}, we have $c(\omega_p')=c(\omega_p)$ if $K/\Q_p$ is unramified; and $c(\omega_p)+c(\omega_p\omega_{K/\Q_p})-1$ if $K/\Q_p$ is ramified. As $c(\omega_p)=c(\epsilon_p) \leq N_p$, using Equ.  \ref{supconductor} we complete the proof of the lemma.
    \end{proof}
	%\begin{proof}
		%Note that $c(\pi_p)=N_p$. Since the conductor of any multiplicative character depends only on its value on the unit group of the ring of integers, from \eqref{central}, it follows that $c(\eta^\sigma) \leq {\mathrm{max}}\{c(\eta),c(\epsilon_p')\}$. Using Lemma \ref{conductor norm map}, we have $c(\epsilon_p')=c(\epsilon_p)$ if $K/\Q_p$ is unramified; and $c(\epsilon_p)+c(\epsilon_p\omega_{K/\Q_p})-1$ if $K/\Q_p$ is ramified. As $c(\epsilon_p) \leq N_p$, we complete the proof of the lemma.
	%\end{proof}

	%Using the above Lemma, we deduce that $ c(\eta^\sigma) \leq c(\pi_p)$ or $2 c(\pi_p)$ if $K / \Q_p$ unramified or ramified respectively.
	
	\begin{proposition}\label{conductor relation}
		For an odd prime $p$, we have the following conductor relation: 
		$$
		1 \leq c({\rm sym}^n(\pi_p))\leq (n+2)c(\pi_p).
		$$
	\end{proposition}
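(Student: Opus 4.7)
The proof splits into three cases according to the type of $\pi_p$: principal series, special, or supercuspidal. For a principal series, Lemma \ref{lem1}(1) gives at once $c(\sym^n(\phi)) \le n \cdot c(\phi) = n\, c(\pi_p) \le (n+2)\, c(\pi_p)$. For a special representation $\pi_p = \mu\otimes {\rm St}_{{\rm GL}_2}$, Lemma \ref{lem1}(2) splits into two subcases: if $\mu$ is unramified, then $c(\pi_p)=1$ while $c(\sym^n(\phi)) = n \le n+2$; if $\mu$ is ramified, then $c(\pi_p) = 2\,c(\mu)$ and $c(\sym^n(\phi)) = (n+1)\,c(\mu^n) \le (n+1)\,c(\mu) \le (n+2)\,c(\pi_p)$. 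Both are immediate consequences of the already established lemmas.

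The bulk of the proof lies in the supercuspidal case. I would begin by applying Proposition \ref{symndirectsum} to write $\sym^n(\phi)$ as a direct sum of two-dimensional induced representations $\ind_{W(K)}^{W(\Q_p)}(\eta^{n-i}(\eta^\sigma)^i)$, together with a one-dimensional character (whose restriction to $W(K)$ is $\eta^{n/2}(\eta^\sigma)^{n/2}$) when $n$ is even. Since conductor is additive over direct sums, I compute each piece via Equ. \ref{indconductor}: each induced summand contributes $2\,c(\eta^{n-i}(\eta^\sigma)^i)$ when $K/\Q_p$ is unramified and $1 + c(\eta^{n-i}(\eta^\sigma)^i)$ when $K/\Q_p$ is ramified (the odd-prime hypothesis yields $v(d_{K/\Q_p}) \in \{0,1\}$). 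The crucial estimate is
\[
c\bigl(\eta^{n-i}(\eta^\sigma)^i\bigr) \le \max\bigl\{c(\eta),\, c(\eta^\sigma)\bigr\} \le c(\eta),
\]
where the second inequality rests on the fact that the Galois involution $\sigma$ preserves the filtration $1 + \mathfrak{p}_K^i$ on $\mathcal{O}_K^\times$, yielding $c(\eta^\sigma) = c(\eta)$, a sharpening of Lemma \ref{etasigma}. Combined with Equ. \ref{supconductor}, each two-dimensional piece contributes at most $c(\pi_p)$, and the one-dimensional piece (bounded via Lemma \ref{conductor norm map} passing between $W(K)$- and $W(\Q_p)$-conductors) also contributes at most $c(\pi_p)$. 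Summing over the at most $\lceil n/2 \rceil + 1$ pieces yields $c(\sym^n(\phi)) \le (n+2)\, c(\pi_p)$.

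The lower bound $1 \le c(\sym^n(\pi_p))$ follows by verifying case by case that whenever $\pi_p$ is ramified some summand of $\sym^n(\phi)$ is ramified: in the ramified-$K$ supercuspidal subcase this is automatic from the $v(d_{K/\Q_p})=1$ term in every induced piece, and the other subcases follow from direct inspection of the diagonal characters in the $L$-parameter. The principal obstacle I anticipate is the bookkeeping in the ramified supercuspidal case, where the $+1$ contribution from $v(d_{K/\Q_p})$ in every piece must be absorbed into $c(\pi_p) = 1 + c(\eta)$; the bound from Lemma \ref{etasigma} alone overshoots for $n \ge 2$, and it is the sharper identity $c(\eta^\sigma) = c(\eta)$ together with the exact count of $\lceil (n+1)/2 \rceil$ pieces that makes the constant $(n+2)$ work.
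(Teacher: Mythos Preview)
Your argument is correct and, in the supercuspidal case, takes a genuinely different and more direct route than the paper. The paper bounds each induced summand $\ind_{W(K)}^{W(\Q_p)}(\eta^{i}(\eta^\sigma)^{j})$ only by $2c(\pi_p)$, not $c(\pi_p)$: it invokes Lemma~\ref{etasigma}, which passes through the identity $\eta^\sigma=\eta^{-1}\omega_p'$ of Equ.~\ref{central} and hence through the conductor of the central character, obtaining in the ramified case merely $c(\eta^\sigma)\le 2c(\pi_p)$. The paper also splits each induced piece into the irreducible and reducible subcases, handling the latter via the decomposition $\varphi_{ij}\oplus\varphi_{ij}\omega_{K/\Q_p}$ and a separate appeal to Lemma~\ref{conductor norm map}. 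Your observation that the Galois involution preserves the filtration $1+\mathfrak{p}_K^{i}$, giving $c(\eta^\sigma)=c(\eta)$ outright, bypasses both the central-character detour and the reducibility dichotomy; applying Equ.~\ref{indconductor} directly (it holds for reducible inductions as well) then yields each two-dimensional piece $\le c(\pi_p)$. In fact your count gives the sharper bound $c(\sym^n(\pi_p))\le \lceil (n+1)/2\rceil\, c(\pi_p)$ in the supercuspidal case, roughly half of what the proposition asserts. The paper's path has the incidental virtue of making the link to the central character explicit, but for the stated inequality your argument is both shorter and stronger.

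One small caveat: the lower bound $1\le c(\sym^n(\pi_p))$ is not quite as immediate for principal series as ``direct inspection'' suggests---if $\mu_1\mu_2^{-1}$ is unramified and the restriction of $\mu_1$ to $\Z_p^\times$ has order dividing~$n$, every diagonal entry $\mu_1^{i}\mu_2^{j}$ of $\sym^n(\phi)$ can be unramified. The paper is no more careful here (it simply asserts that both inequalities follow from Lemma~\ref{lem1}, which in fact addresses only the upper bound), and in any event only the upper bound is used in the proof of the main theorem.
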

	
	\begin{proof}
		%Since local Langlads correspondence preserves conductor, we have $c(\pi_p)=c(\phi)$ where $(\phi,N)$ is the local parameter attached to $\pi_p$.
		The inequality holds when $\pi_p$ is a ramified principal series or special representation, follows from Lemma \ref{lem1}. So let us assume that $\pi_p$ is a supercuspidal representation. We first show that $c({\rm Ind}_{W(K)}^{W(\Q_p)}(\eta^{i}(\eta^\sigma)^j))\leq 2c(\pi_p)$ for all $i,j \geq 0$ with $i+j=n$. Suppose ${\rm Ind}_{W(K)}^{W(\Q_p)}(\eta^{i}(\eta^\sigma)^j)$ is irreducible. Then by \eqref{indconductor},
		\[ 
		c \big({\rm Ind}_{W(K)}^{W(\Q_p)}(\eta^{i}(\eta^\sigma)^j) \big)
		= \begin{cases}
			2c(\eta^{i}(\eta^\sigma)^j) & \text{ if } K/\Q_p \text{ is unramified }\\
			c(\eta^{i}(\eta^\sigma)^j)+1 & \text{ if } K/\Q_p \text{ is ramified. }
		\end{cases} 
		\]
		Now, if $K/\Q_p$ is unramified, then from Lemma \ref{etasigma}, $c(\eta^\sigma)\leq c(\pi_p)$. Since $c(\eta^{i}(\eta^\sigma)^j) \leq \text{max} \{c(\eta), \linebreak c(\eta^\sigma)\}$, we deduce that
		\begin{eqnarray}\label{conductor inequality}
			c({\rm Ind}_{W(K)}^{W(\Q_p)}(\eta^{i}(\eta^\sigma)^j))\leq 2c(\pi_p).
		\end{eqnarray}
		On the other hand, if $K/\Q_p$ is ramified then using Lemma \ref{etasigma} we have that $c(\eta^\sigma)\leq 2c(\pi_p)$. In fact, $c(\eta^\sigma)+1 \leq 2c(\pi_p)$. By the similar arguments used in the unramified case, we obtain the inequality \ref{conductor inequality}.%$c({\rm Ind}_{W(K)}^{W(\Q_p)}(\eta^{i}(\eta^\sigma)^j))\leq 2c(\pi_p)$.
		
		Next, assume that ${\rm Ind}_{W(K)}^{W(\Q_p)}(\eta^{i}(\eta^\sigma)^j)$ is reducible. Then ${\rm Ind}_{W(K)}^{W(\Q_p)}(\eta^{i}(\eta^\sigma)^j)= \varphi_{ij}\oplus \varphi_{ij}\omega_{K/\Q_p}$ for some character $\phi_{ij}$ of $\Q_p^\times$, where $\omega_{K/\Q_p}$ is the unique quadratic character attached to $K/\Q_p$. Also,  $\eta^{i}(\eta^\sigma)^j$ factors through the norm map, i.e., $\eta^{i}(\eta^\sigma)^j= \phi_{ij}\circ N_{K/\Q_p}$. By Lemmas \ref{conductor norm map} and \ref{etasigma}, we have $c(\phi_{ij})\leq c(\pi_p)$. Hence, 
		\begin{eqnarray}
			c({\rm Ind}_{W(K)}^{W(\Q_p)}(\eta^{i}(\eta^\sigma)^j))\leq 2c(\pi_p).
		\end{eqnarray}
		Hence from \eqref{n odd} and \eqref{n even}, we deduce that $c({\rm sym}^n(\pi_p))\leq (n+2)c(\pi_p)$.
	\end{proof}
	
	\section{Cuspidal Cohomology of ${\rm GL}_n$ and proof of the main Theorem}\label{cohomology of gln}
	\subsection{Cuspidal cohomology of ${\rm GL}_n$}	Let $\rho$ be a cuspidal automorphic representation of ${\rm GL}_n/\Q$ with $\rho_\infty$ and $\rho_f$ be its finite and infinite parts respectively. Let $\mu$ be a dominant integral weight corresponding to the standard Borel subgroup and $\mathcal M_{\mu,\mathbb{C}}$ be the underlying vector space of the finite dimensional irreducible highest weight representation corresponding to $\mu$. For a given level structure $K_f^n \subset  {\rm GL}_n(\mathbb{A}_f)$, we denote by ${\rho}^{K_f^n}_f$, the space of $K_f^n$-fixed vectors of $\rho_f$. We say that $\rho$ is cohomological if the relative Lie algebra cohomology with coefficients in $\mathcal M_{\mu,\mathbb{C}}$ and level $K_f^n$ is non zero in some degree i.e.,%We write $\rho \in {\mathrm{Coh}}({\rm GL}_n, \mu, K^n_f)$ if  the relative Lie algebra cohomology with coefficients in $\mathcal M_{\mu,\mathbb{C}}$ is non zero in some degree i.e.,
	$$ H^*(\mathfrak{gl}_n, \mathbb{R}_+^\times \cdot {\rm SO}_n(\mathbb{R}),\rho_\infty \bigotimes \mathcal M_{\mu,\mathbb{C}})\bigotimes \rho_f^{K_f^n} \neq 0 .$$
	
	\noindent We denote this by $\rho \in {\mathrm{Coh}}({\rm GL}_n, \mu, K^n_f)$. For more details, the reader is referred to \cite[Section 2.3]{raghuram}. Note that, the cohomological criterion of $\rho$ depends only on its infinite part $\rho_\infty$.\medskip
	
	Let $f$ be a holomorphic cusp form of even weight $k\geq 2$. If $\pi_f= \bigotimes \limits_{p \leq \infty} \pi_p$ is the cuspidal representation attached to $f$, then  $\pi_\infty$ is a discrete series representation $D_{k-1}$ and the corresponding dominant integral weight is $\lambda_k=\big(\frac{k}{2}-1, 1-\frac{k}{2}\big)$. Also $\pi_f$ is cohomological i.e. $\pi_f\in {\mathrm{Coh}}({\rm GL}_2, \lambda_k, K^2_f)$ \cite[Example 5.2]{raghuramshahidi}. Define
	\begin{equation}\label{muk}
		\mu_k= 
		\begin{cases}
			\big(n(\frac{k}{2}-1), (n-2)(\frac{k}{2}-1), \cdots, (\frac{k}{2}-1), (1-\frac{k}{2}),\cdots, n(1-\frac{k}{2})\big) & \text{ if }  n \text{ is odd }\\
			\big(n(\frac{k}{2}-1), (n-2)(\frac{k}{2}-1), \cdots, 2(\frac{k}{2}-1),0,2(1-\frac{k}{2}),\cdots, n(1-\frac{k}{2})\big) & \text{ if } n \text{ is even. }
		\end{cases}
	\end{equation}
	
	Then, 
	\begin{lemma}
		${\mathrm{sym}^n}(\pi_f) \in {\mathrm{Coh}}({\rm GL}_{n+1}, \mu_k, K^{n+1}_f)$  with respect to the weight $\mu_k$ given in Equ. \ref{muk}.
	\end{lemma}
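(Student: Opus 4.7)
Since the cohomology criterion is insensitive to the finite part, my plan is to reduce to computing with the archimedean component $\sym^n(\pi_{f,\infty})$ and then verify cohomology by matching infinitesimal characters. Concretely, I would identify $\sym^n(\pi_{f,\infty})$ explicitly via the local Langlands correspondence at the real place and then invoke Clozel's criterion: an essentially tempered irreducible unitary representation of ${\rm GL}_{n+1}(\mathbb R)$ is cohomological with respect to $\mathcal M_{\mu_k,\mathbb C}$ if and only if its infinitesimal character coincides with that of $\mathcal M_{\mu_k,\mathbb C}^{\vee}$.

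Recall $\pi_{f,\infty} \cong D_{k-1}$ has L-parameter $\phi_\infty = \ind_{W_{\mathbb C}}^{W_{\mathbb R}}(\chi_{k-1})$ with $\chi_{k-1}(z) = (z/|z|)^{k-1}$. Then on $W_{\mathbb C}$ the parameter is diagonal with entries $\chi_{k-1}$ and $\chi_{k-1}^{-1} = \chi_{k-1}^\sigma$, so by the same formalism of Section \ref{localparametrofpip} (applied archimedeanly) and Proposition \ref{symndirectsum}, the restriction $\sym^n(\phi_\infty)|_{W_{\mathbb C}}$ is diagonal with entries $\chi_{k-1}^{n-2i}$ for $i=0,1,\ldots,n$. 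Grouping these by the action of $\sigma$ decomposes $\sym^n(\phi_\infty)$ over $W_{\mathbb R}$ as a sum of $\lfloor (n+1)/2 \rfloor$ irreducible two-dimensional inductions $\ind_{W_{\mathbb C}}^{W_{\mathbb R}}(\chi_{k-1}^{n-2i})$, each matching a discrete series $D_{(n-2i)(k-1)}$ of ${\rm GL}_2(\mathbb R)$, together with (when $n$ is even) a one-dimensional unitary character coming from the middle exponent $\chi^0_{k-1}$. Consequently $\sym^n(\pi_{f,\infty})$ is the irreducible parabolic induction of the corresponding tensor product of discrete series (and the unitary character), hence essentially tempered.

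From this L-parameter the infinitesimal character of $\sym^n(\pi_{f,\infty})$ is the multiset $\{(n-2i)(k-1)/2 : i = 0,\ldots,n\}$. A direct computation using \eqref{muk} shows that $\mu_k + \rho_{n+1}$, with $\rho_{n+1} = \bigl((n-2i)/2\bigr)_{i=0}^n$ the half-sum of positive roots of ${\rm GL}_{n+1}$, equals exactly the same multiset uniformly in the parity of $n$ (the middle entry $0$ of $\mu_k$ for even $n$ pairs with the middle entry $0$ of $\rho_{n+1}$). Therefore the infinitesimal characters agree and Clozel's criterion yields $\sym^n(\pi_f) \in {\rm Coh}({\rm GL}_{n+1},\mu_k,K_f^{n+1})$. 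The main obstacle is keeping the normalizations of the archimedean local Langlands correspondence and the $\rho$-shift consistent throughout; once that bookkeeping is settled, both parities are handled uniformly, and the temperedness and irreducibility of the parabolic induction follow from standard Harish-Chandra theory.
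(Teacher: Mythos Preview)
Your proposal is correct, but it takes a genuinely different route from the paper. The paper's entire proof is a one-line citation of Theorem~3.2 in \cite{raghuram}, which already packages the statement that symmetric power transfers preserve cohomologicalness with the explicit weight $\mu_k$. You instead unpack this by hand: identify $\sym^n(\phi_\infty)$ on $W_{\mathbb R}$ as a sum of inductions $\ind_{W_{\mathbb C}}^{W_{\mathbb R}}(\chi_{k-1}^{n-2i})$ (plus a central character when $n$ is even), recognise $\sym^n(\pi_{f,\infty})$ as the irreducible tempered parabolic induction with infinitesimal character $\{(n-2i)(k-1)/2\}_{i=0}^n$, and match this with $\mu_k+\rho_{n+1}$ via Clozel's criterion. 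Your computation of $\mu_k+\rho_{n+1}=\bigl((n-2i)(k-1)/2\bigr)_{i=0}^n$ is correct and handles both parities uniformly, and the irreducibility of the induction follows since the discrete series parameters $(n-2i)(k-1)$ are pairwise distinct for $k\ge 2$. What your approach buys is self-containment: a reader sees exactly why $\mu_k$ is the right weight without consulting \cite{raghuram}. What the paper's approach buys is brevity and the avoidance of the normalization bookkeeping you flag (sign of the middle character for even $n$, the precise $\rho$-shift convention in the archimedean local Langlands correspondence), all of which is absorbed into Raghuram's statement. Either is acceptable; your version is essentially a special-case rederivation of the cited theorem.
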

	
	\begin{proof}
		This follows from Th. 3.2 in \cite{raghuram}.
	\end{proof}
	
	This shows that the symmetric $n$th lift preserves cohomology. From this, one might think that the functorial lift of cohomological representation is cohomological but this is not true in general. For this fact, see the example 5.6 in \cite{raghuramshahidi}. 
	
	The following theorem \cite[Th. A]{newtonthorne} proves the automorphy of ${\mathrm{sym}}^n(\pi)$ of a cuspidal reprersentation $\pi$ of ${\mathrm{GL}}_2$ and provides a cuspidality criterion of it which we state below.
	
	\begin{theorem}\label{cuspidalitysymn}
		Let $\pi$ be a regular algebraic, cuspidal automorphic representation of ${\mathrm{GL}}_{2}(\A_\Q)$. Suppose that $\pi$ is non-CM. Then for each integer $n \geq 1$, ${\mathrm{sym}}^n(\pi)$ exists, as a regular algebraic, cuspidal
		automorphic representation of ${\mathrm{GL}}_{n+1}(\A_\Q)$.
	\end{theorem}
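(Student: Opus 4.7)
The statement is Theorem A of Newton--Thorne \cite{newtonthorne}, so in practice the ``proof'' in this paper amounts to invoking their result as a black box; the Weil--Deligne and conductor calculations developed in the preceding sections are nowhere near enough to reprove it from scratch. What I would outline of their argument is as follows.

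The overall strategy is Galois-theoretic. One attaches to $\pi$ (via its underlying holomorphic eigenform $f$) a compatible system of $p$-adic Galois representations $\rho_{f,p}\colon G_\Q \to {\rm GL}_2(\overline{\Q}_p)$, and aims to show that the $(n+1)$-dimensional representation ${\rm sym}^n \rho_{f,p}$ is itself modular, i.e.\ arises from a cuspidal automorphic representation of ${\rm GL}_{n+1}(\A_\Q)$. The automorphic representation so produced is then matched with ${\rm sym}^n(\pi)$ place-by-place via local--global compatibility for ${\rm GL}_{n+1}$ (Harris--Lan--Taylor--Thorne), and the global identification is secured by the strong multiplicity one theorem of Jacquet--Shalika.

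Modularity of ${\rm sym}^n \rho_{f,p}$ is established by an $R=T$/automorphy lifting argument in the Taylor--Wiles--Kisin framework, proceeding in roughly four steps: first, establish residual modularity of ${\rm sym}^n \overline{\rho}_{f,p}$ via potential automorphy results of Harris--Shepherd-Barron--Taylor combined with solvable base change; second, patch over a global universal deformation ring whose local conditions (Fontaine--Laffaille, ordinary, Steinberg, etc., depending on the place) reflect the ramification behaviour one expects of ${\rm sym}^n \rho_{f,p}$; third, apply Taylor's Ihara avoidance technique to remove minimality hypotheses on the level, which is essential here since one has no a priori control on conductors; and fourth, induct on $n$, exploiting symmetric and tensor power decompositions in the style of Kim--Shahidi to bootstrap from low symmetric powers to arbitrary $n$.

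For the cuspidality criterion, the relevant input is that $\pi$ is non-CM if and only if the image of $\rho_{f,p}$ (for suitably chosen $p$) is open in ${\rm GL}_2(\overline{\Q}_p)$ modulo scalars, by the theorem of Ribet--Momose. Largeness of the image forces ${\rm sym}^n \rho_{f,p}$ to be absolutely irreducible as a $G_\Q$-representation, which under the Galois-to-automorphic dictionary corresponds to cuspidality of ${\rm sym}^n(\pi)$ rather than an isobaric sum. (Conversely, if $\pi$ were CM then $\rho_{f,p}$ would be induced from a Hecke character of an imaginary quadratic field, and ${\rm sym}^n$ would manifestly decompose, as is reflected in Proposition \ref{symndirectsum} of the local analysis above.) The principal obstacle is step (ii)---the patching argument and the delicate commutative algebra of the deformation rings---which is the deep technical heart of Newton--Thorne and lies well outside the scope of this paper; accordingly I would simply cite the theorem and proceed.
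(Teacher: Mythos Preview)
Your proposal is correct and matches the paper's approach exactly: the paper does not attempt any proof of this theorem but merely states it and attributes it to \cite[Th.~A]{newtonthorne}, precisely as you anticipate in your first paragraph. Your subsequent sketch of the Newton--Thorne argument is additional exposition the paper does not include, but it is accurate in broad strokes and harmless to add; the essential point---that the result is quoted as a black box and that the local computations of the paper go nowhere near reproving it---is exactly right.
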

	Using the above theorem we can say that if $f \notin C_k(N)$  (see notation \ref{ckn}), then ${\mathrm{sym}}^n(\pi_{f})$ is cuspidal. We now prove our main theorem.

	\subsection{Proof of the main theorem}
	
	We first recall the notation $\mathcal{I}_{k,n}(t)$ from section \ref{introduction}. 
	\begin{notation}
		For $k,n,t \in \mathbb{N}$ with $k$ even, let $\mathcal{I}_{k,n}(t)$ denote the set of all cohomological cuspidal automorphic representation $\Pi$ of ${\mathrm{GL}}_{n+1}(\A_\Q)$ corresponding to the highest weight $\mu_k$ and level $K_f^{n+1}(t)$ such that $\Pi={\mathrm{sym}^n(\pi)}$ for a cuspidal automorphic representation $\pi$ of highest weight $\big(\frac{k}{2}-1, 1-\frac{k}{2}\big)$ and level structure $K_f^{2}(t')$($t' \in \mathbb{N}$) of ${\mathrm{GL}}_{2}(\A_\Q)$.
	\end{notation}
	Note that $\mathcal{I}_{k,n}(t)\subseteq {\mathrm{Coh}}({\rm GL}_{n+1}, \mu_k, K^{n+1}_f(t))$. Furthermore, $\mathcal{I}_{k,n}(t)$ is the same as $\mathcal{D}_k(t)$ in \cite{ambi} and $E_k(t)$ in \cite{chandrasheel-sudipa} when $n$ is $2$ and $3$ respectively. Using the dimension formulae of Lemma \ref{dimensioncuspforms}, for large enough $p$, we have $|\mathcal{I}_{k,n}(p^j)|\ll_k p^{2j}$. Now we prove our main theorem which we restate here again. 
	
	\begin{theorem}
			Let $k,n \in \mathbb{N}$ with $k\geq2$ even, n$\geq 2$. We have
		$$ p^{\frac{2}{n+2}j} \ll_k |\mathcal{I}_{k,n}(p^j)| ~~~~~~~~~~ \text{ as the prime } p \to \infty$$
		where the implied constant depends upon $k$.
	\end{theorem}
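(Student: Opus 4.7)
The plan is to produce the lower bound by exhibiting an explicit family of members of $\mathcal{I}_{k,n}(p^j)$ built from newforms. Set $i = \lfloor j/(n+2) \rfloor$ and consider the cuspidal automorphic representations $\pi_f$ attached to holomorphic newforms $f$ of weight $k$ and level $p^i$. Each such $\pi_f$ is unramified at every prime $q \neq p$, so $\mathrm{sym}^n(\pi_{f,q})$ is unramified there; at the prime $p$ we have $c(\pi_{f,p}) = i$, and Proposition \ref{conductor relation} gives
\[
c\bigl(\mathrm{sym}^n(\pi_{f,p})\bigr) \leq (n+2) i \leq j.
\]
Thus $\mathrm{sym}^n(\pi_f)$ has nonzero $K_f^{n+1}(p^j)$-fixed vectors, and by the lemma preceding Theorem \ref{cuspidalitysymn} it is cohomological with the desired highest weight $\mu_k$.

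To turn this into a count, I will estimate the number of such newforms after removing the CM ones and dividing by the fiber of the map $f \mapsto \mathrm{sym}^n(\pi_f)$. By Lemma \ref{dimensionnewforms}, the number of weight-$k$ newforms of level $p^i$ is asymptotically $\frac{k-1}{4\pi^{2}}\bigl(1 - p^{-2}\bigr)^{2} p^{2i}$ as $p \to \infty$, hence $\gg_k p^{2i}$. By Proposition \ref{improvment}, the CM contribution at level $p^i$ is $\ll_\epsilon p^{i + 1/2 + \epsilon}$, which is negligible compared to $p^{2i}$ once $i \geq 1$. For each of the remaining (non-CM) newforms, Theorem \ref{cuspidalitysymn} guarantees that $\mathrm{sym}^n(\pi_f)$ is a cuspidal automorphic representation of $\mathrm{GL}_{n+1}(\A_\Q)$, so by the previous paragraph it lies in $\mathcal{I}_{k,n}(p^j)$.

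Finally, I must bound the fiber of the assignment $f \mapsto \mathrm{sym}^n(\pi_f)$. If $\mathrm{sym}^n(\pi_{f_1}) \cong \mathrm{sym}^n(\pi_{f_2})$ for two non-CM newforms of level $p^i$, then a standard twist argument (strong multiplicity one together with the fact that the $n$-th symmetric power of an irreducible $2$-dimensional representation is determined by the representation up to twist by an $n$-torsion character) forces $\pi_{f_2} \cong \pi_{f_1} \otimes \chi$ for some Dirichlet character $\chi$ with $\chi^{n} = 1$ whose conductor divides $p^i$. For $p$ sufficiently large relative to $n$, the subgroup of $n$-torsion characters of $(\Z/p^i)^{\times}$ has order at most $n$, so the fiber has size bounded by a constant depending only on $n$. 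Combining the three steps yields $|\mathcal{I}_{k,n}(p^j)| \gg_k p^{2i} \gg_k p^{\frac{2}{n+2} j}$, as required.

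The main obstacle I anticipate is precisely the fiber-control step: confirming uniformly in $p$ and $j$ that distinct twist classes of non-CM newforms really do produce distinct $\mathrm{sym}^n$-transfers needs a careful application of strong multiplicity one and a characterization of the characters $\chi$ with $\mathrm{sym}^n(\pi \otimes \chi) \cong \mathrm{sym}^n(\pi)$, which in turn requires excluding the dihedral case (already handled by the non-CM hypothesis). The remaining ingredients — Proposition \ref{conductor relation} at the prime $p$, Lemma \ref{dimensionnewforms} and Proposition \ref{improvment} for counting, and Theorem \ref{cuspidalitysymn} for automorphy and cuspidality — combine in a straightforward way once this fiber is controlled.
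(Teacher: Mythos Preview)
Your argument follows the same skeleton as the paper's: push newforms of level $p^i$ through $\mathrm{sym}^n$, use Proposition~\ref{conductor relation} to land in $\mathcal{I}_{k,n}(p^{(n+2)i})$, count non-CM newforms via Lemma~\ref{dimensionnewforms} and Proposition~\ref{improvment}, and invoke Theorem~\ref{cuspidalitysymn} for cuspidality. Two points are worth flagging. First, your fibre-control step (bounding how many newforms can share the same $\mathrm{sym}^n$-transfer) is \emph{absent} from the paper's proof: the paper passes directly from the newform count to $|\mathcal{I}_{k,n}(p^{(n+2)i})|$ without discussing injectivity, so on this point you are being more careful, and your twist argument (reducing to $n$-torsion characters of $p$-power conductor, of which there are at most $\gcd(n,p-1)\le n$ for $p>n$) is the right way to close that gap. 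Second, your final chain $|\mathcal{I}_{k,n}(p^j)|\gg_k p^{2i}\gg_k p^{\frac{2}{n+2}j}$ with $i=\lfloor j/(n+2)\rfloor$ breaks when $(n+2)\nmid j$: then $2i<\tfrac{2j}{n+2}$ strictly, and the shortfall $p^{\frac{2j}{n+2}-2i}$ is unbounded as $p\to\infty$, so the last $\gg_k$ is false. The paper avoids this by simply setting $j=(n+2)i$ and stating the conclusion only for such $j$; you should do the same (or let the implied constant depend on the residue of $j$ mod $n+2$).
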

	
	\begin{proof}
		Let $\pi=\bigotimes \limits_{v\leq \infty}\pi_v$ be a cuspidal automorphic representation of ${\rm GL}_2(\mathbb{A}_\mathbb{Q})$ corresponding to the level $K_f^2(p^i)$ for some $i\geq 1, p\geq 3$ prime. Then either ${\mathrm{sym}}^n(\pi)\in \mathcal{I}_{k,n}(p^j)$ for some $j\in \mathbb{N}$ or $\pi$ is CM (i.e. obtained by automorphic induction of an id\`ele class character of an imaginary quadratic extension of $\Q$). Now, $\pi_p$ corresponds to a cusp form in $S_k(\Gamma_1(p^l))$ and the correspondence is bijective if we restrict it to $S_k^{\text{new}}(\Gamma_1(p^l))$ \cite[Th. 5.19]{gelbert} where $1\leq l\leq i$. Note that $c(\pi_p)=l$ \cite[Lemma 5.16]{gelbert}. 
		
		Using Prop. \ref{conductor relation} we have that a newform in $S_k^{\text{new}}(\Gamma_1(p^l))$ corresponds to an element in $\mathcal{I}_{k,n}(p^{l'})$ only if $1\leq l' \leq (n+2)l$. Furthermore, $1\leq l\leq i \implies 1\leq l' \leq (n+2)i$. Hence we conclude that if $f \in \bigoplus_{1\leq l \leq i}S_k^{\text{new}}(\Gamma_1(p^l)) \backslash C_k(p^i)$, then ${\mathrm{sym}}^n(\pi_f)\in \mathcal{I}_{k,n}(p^{(n+2)i}) \backslash \mathcal{I}_{k,n}(p)$ where $\pi_f$ is the corresponding cuspidal representation attached to $f$. %As $1- \frac{1}{p^2}> \frac{3}{4}$ for $p\geq 3$, using Lemma \ref{dimensionnewforms}, we have 
		As $1-\frac{1}{p^2}> \frac{3}{4}$ for $p\geq 3$, using Lemma \ref{dimensionnewforms}, we have
		\begin{eqnarray}
			\sum_{1 \leq l \leq i} \mathrm{dim}_{\mathbb{C}}S_k^{\text{new}}(\Gamma_1(p^l)) \gg_k p^{2i} . 
		\end{eqnarray}
		
		Since $|C_k(p^i)|\ll_{\epsilon} p^{i+\frac{1}{2}+\epsilon}$ as $p \to \infty$ (Prop. \ref{improvment}) which is negligible compared to $p^{2i}$, we obtain that
		\begin{eqnarray}
			p^{2i} \ll_k |\mathcal{I}_{k,n}(p^{(n+2)i})|~~~~~~~~~~~~~~~~~ \text{ as } p\to \infty.
		\end{eqnarray} 
		
		Taking $j=(n+2)i$, we deduce that $$p^{\frac{2}{n+2}j} \ll_k |\mathcal{I}_{k,n}(p^j)| ~~~\text{ as the prime } p\to \infty.$$
		
	\end{proof}
	%\begin{remark}
	%	From the proof of the main theorem we observe that the lower bound relies on the upper bound of the conductors of symmetric $n$th transfers obtained in Prop. \ref{conductor relation}. Hence one can get a better lower bound by improving the proposition.
		
		%It is easy to see from the proof of the main theorem that the lower bound relies on the {\color{red} conductor relation}. Hence one can provide a better bound by improving Prop. \ref{conductor relation}.
		
	    %When $n$ is large, $p^{\frac{2}{n+2}}$ becomes very small and hence the lower bound does not provide much imformation about the contribution of symmetric $n$th transfers to the cohomology of ${\rm GL}_{n+1}$. One can get rid of this situation by improving the bound of the conductors of symmetric $n$th transfers obtained in Prop. \ref{conductor relation} as the proof of the above theorem relies on this bound. 
     %\end{remark}
 
     \begin{remark} \label{improve}
     	Our result improves the lower bounds obtained in the previous works for $n=2, 3$.
     	When $n=2$, the main theorem implies that 
     	$$p^{\frac{j}{2}} \ll_k |\mathcal{I}_{k,2}(p^j)| \text{ as } p\to \infty,$$
     	improving the result obtained in \cite[Th. 1.5]{ambi} where the lower bound is $p^{\frac{2j}{5}}$. 
  		For $n=3$, assuming the central character of $\pi_p$ is trivial, using \cite[Prop. 12]{chandrasheel-sudipa} we obtain that the upper bound of $c({\rm sym}^3(\pi_p))$ is $2 c(\pi_{p})$. Without this assumption, i.e., for nontrivial central character we obtain $5 c(\pi_{p})$ as the upper bound, see Prop. \ref{conductor relation}. As a result, our lower bound is $p^{\frac{2j}{5}}$, improving the result of \cite{chandrasheel-sudipa} for $n=3$. Thus we have obtained better lower bounds in this article compared to the previous works for $n=2, 3$.

     	%A correction$^{\text{{\color{red} what correction?}}}$ to the main theorem of \cite{chandrasheel-sudipa} shows that we have obtained better lower bound in this article in the case of $n=3$.
     \end{remark}
	
	  \noindent{\it Acknowledgement}: The first author acknowledges NBHM postdoctoral fellowship at ISI Kolkata. The second author is supported by the institute postdoctoral fellowship at HRI Prayagraj. The authors were greatly benefitted from several discussions and encouragement of Dr. Aprameyo Pal during the preparation of this article.

\end{document}